\newtheorem{theorem}{Theorem}
\newtheorem{lemma}[theorem]{Lemma}
\newtheorem{proposition}[theorem]{Proposition}
\newtheorem{example}[theorem]{Example}
\newtheorem{problem}[theorem]{Problem}
\begin{document}
\begin{frontmatter}
\title{Heffter arrays over partial loops}

\author{Ra\'{u}l M.\ Falc\'{o}n}
\address{School of Building Engineering University of Seville, Spain}
\ead{rafalgan@us.es}

\author{Lorenzo Mella}
\address{Dip. di Scienze Fisiche, Informatiche, Matematiche, Universit\`a degli Studi di Modena e Reggio Emilia, Via Campi 213/A, I-41125 Modena, Italy}
\ead{lorenzo.mella@unipr.it}
\date{}

\begin{abstract}
A Heffter array over an additive group $G$ is any partially filled array $A$ satisfying that: (1) each one of its rows and columns sum to zero in $G$, and (2) if $i\in G\setminus\{0\}$, then either $i$ or $-i$ appears exactly once in $A$. In this paper, this notion is naturally generalized to that of $\mathcal{B}$-Heffter array over a partial loop, where $\mathcal{B}$ is a set of block-sum polynomials over an affine $1$-design on the set of entries in $A$.
\end{abstract}

\begin{keyword}
Heffter array \sep Latin square \sep quasigroup.
\MSC[2010] 05B15 \sep 20N05
\end{keyword}

\end{frontmatter}
\makeatletter
\def\ps@pprintTitle{%
  \let\@oddhead\@empty
  \let\@evenhead\@empty
  \let\@oddfoot\@empty
  \let\@evenfoot\@oddfoot
}
\makeatother

\begin{abstract}
A Heffter array over an additive group $G$ is any partially filled array $A$ satisfying that: (1) each one of its rows and columns sum to zero in $G$, and (2) if $i\in G\setminus\{0\}$, then either $i$ or $-i$ appears exactly once in $A$. In this paper, this notion is naturally generalized to that of $\mathcal{P}$- and $\mathcal{D}$-Heffter arrays over partial loops, where $\mathcal{P}$ is a set of all one polynomials without constant terms, and $\mathcal{D}$ is an affine $1$-design, both of them related to the entries in $A$.
\end{abstract}

\section{Introduction}

In 2015, Archdeacon \cite{Archdeacon2015}  introduced the notion of \textit{Heffter system} that is defined as follows: given an  integer $m$, a \textit{half-set} $L$ is an  $m$-subset of an abelian group $(G,+)$ of order $2m+1$, such that it contains exactly one of each pair $\{x,-x\}$ and $0 \not\in L$. Then, a \textit{Heffter system} $D(m;k)$ is a partition $\mathcal{B}$ of $L$ into zero-sum $k$-subsets. 
Two Heffter systems $\mathcal{B}_1$ and $\mathcal{B}_2$, defined on the same half-set are said to be \textit{orthogonal} if for every $B_1 \in \mathcal{B}_1$ and $B_2 \in \mathcal{B}_2$ it holds $|B_1 \cap B_2| \leq 1$. (In the original definition, these Heffter systems are called \textit{sub-orthogonal}). 

Archdeacon then arranged two orthogonal Heffter systems in an array, that he called \textit{Heffter array}. In particular, he gave an equivalent definition for Heffter arrays, that is currently the most commonly used one. Given the additive group $\left(\mathbb{Z}_{2nk+1},+\right)$, a \textit{Heffter array $H(m,n;h,k)$} is an $m\times n$ partially filled array  with entries in $\mathbb{Z}_{2nk+1}$ such that the following conditions hold.
\begin{itemize}
    \item[(1)] Each row contains $h$ filled cells and each column contains $k$ filled cells.
    \item[(2)] For each $i\in\mathbb{Z}_{2nk+1}\setminus \{0\}$, either $i$ or $-i$ appears exactly once in the array.
    \item[(3)] The elements in every row and column sum to 0 in $\left(\mathbb{Z}_{2nk+1},+\right)$.
\end{itemize}
If $m = n$, then necessarily $h = k$, and the array is denoted by $H(n; k)$. If no empty cells exist, namely if $h=n$ and $k=m$, then it is  denoted by $H(m,n)$. The most complete existence results on Heffter arrays are the following.
\begin{theorem}\label{thm:Heffter}{\rm \cite{ADDY,CDDY,DW}}
There exists an $H(n; k)$ if and only if $3 \leq k \leq  n$.
\end{theorem}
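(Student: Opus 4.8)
The plan is to separate the elementary necessity from the constructive sufficiency. For necessity, assume an $H(n;k)$ exists. Each column is a length-$n$ vector containing exactly $k$ filled cells, so $k\le n$ at once. For the lower bound, a row with a single filled entry would force that entry to equal $0$, which is barred from the half-set, so $k\ge 2$; and if $k=2$ then each row is a pair $\{a,b\}$ with $a+b\equiv 0$, whence $b=-a$, so both $a$ and $-a$ occur in the array (they are distinct because $\mathbb{Z}_{2nk+1}$ has odd order), contradicting condition (2). Hence $k\ge 3$.

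For sufficiency I would realize the array through cyclic diagonals. Index rows and columns by $\mathbb{Z}_n$ and fix a set $D\subseteq\mathbb{Z}_n$ of $k$ offsets; filling only the cells $(i,i+d)$ with $d\in D$ gives exactly $k$ filled cells in every row and every column, and exactly $nk$ filled cells in total---matching the size of a half-set of $\mathbb{Z}_{2nk+1}$. The remaining task is to assign values $a_d(i)$ to these cells so that condition (2) holds (each pair $\{x,-x\}$ used exactly once) while $\sum_{d\in D}a_d(i)=0$ for each row $i$ and the analogous column sums vanish. A standard device is to let each diagonal carry an arithmetic progression modulo $2nk+1$, so that the sum conditions collapse to a small system of linear congruences in the progressions' offsets and common differences, and the half-set condition becomes a covering statement about the resulting residues.

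I would organize these assignments by the residue classes of $k$ (and, where needed, of $n$) modulo small integers, giving a uniform construction within each class, and then enlarge the parameters by recursion: combining an $H(n;k)$ with a smaller array in a product-type construction, or passing from the dense square case $k=n$ to intermediate $k$ by adding or deleting diagonals while repairing the sum and coverage conditions.

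The hard part will be pushing the sufficiency through in complete generality. The congruence systems controlling the diagonal entries behave differently from one residue class to the next, so no single closed form covers all admissible $(n,k)$; moreover a finite collection of small or exceptional pairs typically evades every uniform family and must be dispatched by bespoke designs or by computer search. The genuine difficulty throughout is meeting the half-set condition---using each $\{x,-x\}$ exactly once---simultaneously with both the row and the column sum constraints, and it is this threefold balancing act that spreads the existence proof across the several works cited.
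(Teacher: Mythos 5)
This statement is not proved in the paper at all: it is quoted as Theorem~\ref{thm:Heffter} with citations to \cite{ADDY,CDDY,DW}, where the actual constructions live. So your proposal must be judged on its own merits, and it splits cleanly in two. The necessity half is correct and complete: $k\le n$ is immediate from the column count, $k=1$ would force a zero entry (excluded from the half-set), and $k=2$ would force each row to be a pair $\{a,-a\}$ with $a\neq -a$ (the group has odd order $2nk+1$), so both members of a pair would appear, violating condition (2). This is a genuine, self-contained argument and matches the standard one.

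The sufficiency half, however, is a plan rather than a proof, and this is a genuine gap. You describe the right general shape --- cyclic diagonal supports, arithmetic progressions along diagonals, congruence systems split by residue classes of $k$ and $n$, recursive or product-type extensions --- and this is indeed the spirit of the constructions in \cite{DW}, \cite{CDDY} and \cite{ADDY}. But you never exhibit a single explicit assignment $a_d(i)$, never verify that any row or column sum vanishes, and never check the half-set covering condition for even one residue class of $(n,k)$. The entire difficulty of the theorem is precisely the ``threefold balancing act'' you name at the end: making the row sums, column sums, and the exactly-one-of-$\{x,-x\}$ condition hold simultaneously for every pair $3\le k\le n$, including the exceptional small cases. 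That content fills the three cited papers (integer square arrays in \cite{DW}, the non-integer square case in \cite{CDDY}, and arrays with empty cells in \cite{ADDY}), and none of it is reproduced or replaced by your outline. As written, the proposal establishes only the ``only if'' direction.
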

\begin{theorem}{\rm \cite{ABD}}
There exists an $H(m,n)$ if and only if $m,n\geq 3$.
\end{theorem}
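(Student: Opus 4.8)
The plan is to prove necessity by a short sign argument and sufficiency by explicit construction, leaning on the already-established square case as a black box.

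For necessity, recall that since the array is fully filled every row has exactly $n$ cells and every column exactly $m$ cells, each occupied by a nonzero element of $\mathbb{Z}_{2mn+1}$. Suppose $n\le 2$ and look at any row. If $n=1$ its single entry would have to equal $0$, which is impossible since $0$ never occurs. If $n=2$ the two entries $a,b$ satisfy $a+b\equiv 0$, forcing $b\equiv -a$; but then both $a$ and $-a$ appear in $A$, contradicting condition (2). Hence $n\ge 3$, and transposing $A$ (which turns an $H(m,n)$ into an $H(n,m)$, preserving all three conditions) yields $m\ge 3$ by the same argument.

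For sufficiency, by the transpose symmetry I may assume $m\le n$. The diagonal case $m=n$ is exactly $H(n;n)$, which exists for every $n\ge 3$ by Theorem \ref{thm:Heffter}, so only the genuinely rectangular arrays $3\le m<n$ remain. I would organize the construction around the half-set representatives $\{1,2,\ldots,mn\}$, assigning to each value $v$ a sign $\varepsilon_v\in\{\pm 1\}$ and a cell so that $\sum \varepsilon_v v\equiv 0$ along every line, and then split according to $mn \bmod 4$. When $mn\equiv 0,3\pmod 4$ the corresponding integer Heffter array (rows and columns summing to $0$ over $\mathbb{Z}$) exists, and since a sum that vanishes in $\mathbb{Z}$ also vanishes in $\mathbb{Z}_{2mn+1}$, this immediately supplies the required array. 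These are assembled from small \emph{shiftable} blocks, arrays in which each line carries equally many positive and negative entries so that their absolute values may be translated by a constant without disturbing any line sum; such blocks can be stacked and juxtaposed to realize arbitrary admissible dimensions. When $mn\equiv 1,2\pmod 4$ no integer array exists, and here one must exploit the modulus directly: build base arrays for the smallest relevant dimensions in each class and attach shiftable strips to grow them, arranging a controlled number of wraparound cancellations (pairs summing to $2mn+1$ rather than to $0$ in $\mathbb{Z}$) so that each line still vanishes modulo $2mn+1$.

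The main difficulty is the coupling of the two families of constraints: a choice of signs and placements that zeroes every row must simultaneously zero every column while using each absolute value exactly once. Shiftability is the device that decouples block sums from their actual magnitudes and makes the juxtaposition arguments go through, but maintaining a global account of signs and of which representative of each pair $\{v,-v\}$ has been used is delicate. The truly hard residue classes are $mn\equiv 1,2\pmod 4$, where the integer obstruction is real and the cancellations must be engineered modulo $2mn+1$; verifying the finitely many base cases and checking that the recursive attachments never reuse an absolute value nor break a line sum is where the bulk of the case analysis lives.
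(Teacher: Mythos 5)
First, a point of reference: the paper does not prove this statement at all. It is quoted as a known result with the citation \cite{ABD}, so there is no internal proof to compare against; your attempt has to stand on its own.

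Your necessity argument is correct and essentially complete. Since an $H(m,n)$ has all $mn$ cells filled and there are exactly $mn$ pairs $\{i,-i\}$ in $\mathbb{Z}_{2mn+1}\setminus\{0\}$, condition (2) forces every cell to hold a distinct pair-representative, so $0$ never appears, killing $n=1$; and a row with two entries $a,b$ and $a+b\equiv 0 \pmod{2mn+1}$ forces $b=-a$ with $a\neq -a$ (the modulus is odd), so the pair $\{a,-a\}$ consumes two cells and some other pair goes unrepresented, killing $n=2$. Transposition then handles $m$, and reducing the square case to Theorem \ref{thm:Heffter} is also legitimate.

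The genuine gap is the sufficiency direction for $3\le m<n$, which is the entire content of the theorem beyond Theorem \ref{thm:Heffter}, and which you describe rather than prove. Your claim that for $mn\equiv 0,3\pmod 4$ ``the corresponding integer Heffter array exists'' is not something you may invoke: the existence of tight rectangular integer Heffter arrays is precisely (half of) the statement being proved, so as written this step is circular. Likewise, for $mn\equiv 1,2\pmod 4$ you correctly identify the parity obstruction over $\mathbb{Z}$, but the ``controlled number of wraparound cancellations'' modulo $2mn+1$ is exactly the delicate construction that constitutes the bulk of \cite{ABD}; asserting that it can be arranged is not an argument. No shiftable block is exhibited, no base array is written down, no juxtaposition rule is stated, and no verification is performed that line sums vanish and each absolute value is used exactly once --- you even flag this yourself as ``where the bulk of the case analysis lives.'' What you have is a correct proof of the easy necessity half together with an accurate road map of the known proof of sufficiency, but not a proof of the theorem.
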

Results on the rectangular case with empty cells, variants and generalizations of the original concept, and related topics can be found in \cite{PD}. 
Of particular interest for the aims of this paper, we remark the generalization recently suggested by Buratti \cite{Buratti2023} of Heffter arrays over non-abelian groups, that further extends the definition of an Heffter array over an abelian group \cite{CMPPBiembed}. The lack of commutativity implies the necessity of ordering those symbols whose summation must be computed. A similar observation has also been indicated in \cite{MT} for generalized Heffter arrays. Our study goes one step further by relaxing not only commutativity but also associativity. This extension to non-associative algebraic structures requires a much deeper formalism for computing the summation of symbols. Even more, we also explore the possibility of dealing with partial binary operations over which the corresponding summation is done. In short, we introduce in this paper the notion of Heffter array over partial loops. A main goal here is showing the existence of these new objects beyond the case of dealing with abelian groups.

Furthermore, the possibility of having more than two mutually orthogonal Heffter systems over the same half-set has recently been explored in \cite{BP}, and further studied in \cite{BJMP,BP2}. In particular, a $(v,k;r)$ \textit{Heffter space} is a set of $r$ mutually orthogonal Heffter systems $\{P_1,\dotsc,$ $ P_r\}$ over the same half-set, where each $P_i$ is a Heffter system $D(v,k)$. In this paper
we study the natural extension of this concept to Heffter spaces over partial loops.

The paper is organized as follows. Section \ref{sec:preliminaries} deals with some preliminary concepts and results that are used throughout the paper. In Section \ref{sec:BlockSum}, we make use of non-associative all one polynomials to formalize the summation over partial loops. A notion of compatibility is then introduced to put emphasis on those polynomials over which summations can be computed from a given partial loop. As an illustrative example, we construct in Section \ref{sec:Example} a family of polynomials describing zero sums from a non-associative loop of order $kp$, where $p$ is an odd prime and $k\neq 2$ is a positive integer. Our generalization of Heffter arrays over partial loops are then introduced in Section \ref{sec:Heffter_loop}. Particularly, we study under which conditions one can ensure the existence of a partial loop over which a given partially filled array constitutes a Heffter array. Some constructive examples are described in this regard, so that we can show the potential of our approach.

\section{Preliminaries} \label{sec:preliminaries}

First, we introduce some notation that we use throughout the paper. For each set $X$, with $0\not\in X$, we define the set $\widetilde{X}:=\{x,-x\colon\, x\in X\}\cup\{0\}$, and we denote by $\mathrm{Sym}(X)$ the symmetric group on $X$. In addition, we denote by $\overline{Sym}(\widetilde{X})$ the set of permutations $\pi\in\mathrm{Sym}(\widetilde{X})$ such that $\pi(0)=0$ and $\pi(-x)=x$ for all $x\in X$. Furthermore, for each positive integer $n$, we consider the cyclic group $(\mathbb{Z}_n,+)$ and the set $[n]:=\{1,\ldots,n\}$. If $n$ is prime, then we consider the field $(\mathbb{Z}_n,+,\cdot)$; and we write $x^{-1}$ to denote the multiplicative inverse of an element $x$ in the group $(\mathbb{Z}_n\setminus\{0\},\cdot)$, and by $\frac{a}{x}$ we mean $ax^{-1}$. Finally, we consider throughout the paper an ordered set of symbols $S:=\{s_1,\ldots,s_n\}$.

Now, we remind some preliminary concepts on affine $1$-designs, partially filled arrays and partial loops that we use in this paper.

\subsection{Affine designs}

A {\em $1$-$(n,k,\lambda)$ design} on the set $S$ is any set of $k$-subsets (called {\em blocks}) of $S$ such that each element (called {\em point}) in $S$ appears in exactly $\lambda$ blocks. It is {\em affine} if (1) blocks can be partitioned into {\em parallel classes}, each of which partitions in turn the set $S$; and (2) there exists a constant $\mu$ such that any two non-parallel blocks intersect in exactly $\mu$ points.

\subsection{Partially filled arrays}

A {\em partially filled array} of order $m$ over $S$ is any $m\times m$ array $A=(A[i,j])$ with rows and columns indexed in $\mathbb{Z}_m$ and entries in the set $S\cup\{\cdot\}$. If $A[i,j]=\cdot$, then the triple $(i,j,\cdot)$ is said to be an {\em empty entry} in $A$. Hence, the array $A$ is uniquely represented by its set of non-empty entries
\[\mathrm{Ent}(A):=\left\{(i,j,A[i,j])\in \mathbb{Z}_m\times \mathbb{Z}_m\times S\right\}.\]
If $\mathrm{Ent}(A)\supseteq\mathrm{Ent}(A')$ for some partially filled array $A'$ of order $m'\leq m$, then it is said that $A$ {\em contains} $A'$. The cardinality of any subset $T\subseteq \mathrm{Ent}(A)$ constitutes its {\em weight}. In addition, we define the set of symbols of $T$ as
\[\mathrm{Symb}(T):=\left\{s\in S\colon\, \exists\, i,j\in \mathbb{Z}_m  \text{ such that } \left(i,j,s\right)\in T \right\}.\]
By abuse of notation, we define the weight of $A$ as that of $\mathrm{Ent}(A)$, and we also define the set $\mathrm{Symb}(A):=\mathrm{Symb}(\mathrm{Ent}(A))$.

\vspace{0.1cm}

Two partially filled arrays $A_1=(A_1[i,j])$ and $A_2=(A_2[i,j])$ of order $m$ over $S$ are {\em isomorphic} if there exists a permutation $\pi\in\mathrm{Sym}(\mathbb{Z}_m )$ satisfying that
\[A_2[\pi(i),\, \pi(j)]=\pi(A_1[i,j])\]
for all $i,j\leq m$ such that $A_1[i,j]\in S$.

\vspace{0.1cm}

Let $\mathcal{A}(m,S)$ denote the set of partially filled arrays of order $m$ over the set $S$ so that each symbol in $S$ appears exactly once within the whole array. (If the array does not contain empty entries, then it has recently been termed a {\em pattern} \cite{Garbe2023}.) For each array $A\in\mathcal{A}(m,S)$, we denote by $\mathrm{Aff}_{\lambda}(A)$ the set of affine 1-$(m^2,m,\lambda)$ designs over the set of entries of $A$, so that every block has weight at least three, and any two non-parallel blocks intersect in exactly one entry. Throughout this paper, we deal with affine $1$-designs formed by at least two of the following four partitions of the set of entries of $A$.

\begin{itemize}
\item[$\mathrm{(1)}$] The {\em row-partition} $\mathrm{Part}_{\mathrm{row}}(A):=\left\{\left\{(i,j,A[i,j])\colon\,j\in \mathbb{Z}_m \right\}\colon\,i\in \mathbb{Z}_m \right\}$.
\item[$\mathrm{(2)}$] The {\em column-partition} $\mathrm{Part}_{\mathrm{col}}(A):=\left\{\left\{(i,j,A[i,j])\colon\,i\in \mathbb{Z}_m \right\}\colon\,j\in \mathbb{Z}_m \right\}$.
\item[$\mathrm{(3)}$] The {\em diagonal-partition} $\mathrm{Part}_{\mathrm{diag}}(A):=\{\{(i,i+j,A[i,i+j])\colon\,i\in \mathbb{Z}_m \}\colon\,j\in \mathbb{Z}_m \}$.
\item[$\mathrm{(4)}$] The {\em anti-diagonal-partition} $\mathrm{Part}_{\mathrm{Adiag}}(A):=\{\{(i,-i-j-1,A[i,-i-j-1])\colon\,i\in \mathbb{Z}_m \}\colon\,j\in \mathbb{Z}_m \}$.
\end{itemize}

\vspace{0.2cm}

We illustrate these partitions with a pair of examples.

\begin{example}\label{example_Affine} Let us consider the array
\[A\equiv\begin{array}{|c|c|c|}\hline
1 & 2 & 3\\ \hline
6 & 4 & 5\\ \hline
8 & 9 & 7 \\ \hline
\end{array}\in\mathcal{A}(3,[9]).\]
We partition its set of entries in the following four ways. (Blocks within each partition are distinguished by highlighting the corresponding cells in different colors.)
\[\begin{array}{ccccccc} \begin{array}{|c|c|c|}\hline
\cellcolor{red!100}{\color{white} 1}& \cellcolor{red!100}{\color{white} 2} & \cellcolor{red!100}{\color{white} 3}\\ \hline
\cellcolor{red!50}{\color{white} 6} & \cellcolor{red!50}{\color{white} 4} & \cellcolor{red!50}{\color{white} 5}\\ \hline
\cellcolor{red!25}{\color{white} 8} & \cellcolor{red!25}{\color{white} 9} & \cellcolor{red!25}{\color{white} 7}\\ \hline
\end{array} &  &
\begin{array}{|c|c|c|}\hline
\cellcolor{blue!100}{\color{white} 1}& \cellcolor{blue!50}{\color{white} 2} & \cellcolor{blue!25}{\color{white} 3}\\ \hline
\cellcolor{blue!100}{\color{white} 6} & \cellcolor{blue!50}{\color{white} 4} & \cellcolor{blue!25}{\color{white} 5}\\ \hline
\cellcolor{blue!100}{\color{white} 8} & \cellcolor{blue!50}{\color{white} 9} & \cellcolor{blue!25}{\color{white} 7}\\ \hline
\end{array} & & \begin{array}{|c|c|c|}\hline
\cellcolor{orange!100}{\color{white} 1}& \cellcolor{orange!50}{\color{white} 2} & \cellcolor{orange!25}{\color{white} 3}\\ \hline
\cellcolor{orange!25}{\color{white} 6} & \cellcolor{orange!100}{\color{white} 4} & \cellcolor{orange!50}{\color{white} 5}\\ \hline
\cellcolor{orange!50}{\color{white} 8} & \cellcolor{orange!25}{\color{white} 9} & \cellcolor{orange!100}{\color{white} 7}\\ \hline
\end{array} & & \begin{array}{|c|c|c|}\hline
\cellcolor{black!100}{\color{white} 1}& \cellcolor{black!50}{\color{white} 2} & \cellcolor{black!25}{\color{white} 3}\\ \hline
\cellcolor{black!50}{\color{white} 6} & \cellcolor{black!25}{\color{white} 4} & \cellcolor{black!100}{\color{white} 5}\\ \hline
\cellcolor{black!25}{\color{white} 8} & \cellcolor{black!100}{\color{white} 9} & \cellcolor{black!50}{\color{white} 7}\\ \hline
\end{array}\\
\mathrm{Part}_{\mathrm{row}}(A) & & \mathrm{Part}_{\mathrm{col}}(A) & & \mathrm{Part}_{\mathrm{diag}}(A) & &
\mathrm{Part}_{\mathrm{Adiag}}(A)
\end{array}
\]
Thus, $\mathcal{D}_A:=\{\mathrm{Part}_{\mathrm{row}}(A),\,\mathrm{Part}_{\mathrm{col}}(A),$ $ \mathrm{Part}_{\mathrm{diag}}(A),$ $\mathrm{Part}_{\mathrm{Adiag}}(A)\}$ is an affine design over the entries of $A$, namely $\mathcal{D}_A \in \mathrm{Aff}_4(A)$.

\hfill $\lhd$
\end{example}

\vspace{0.2cm}

\begin{example}\label{example_Affine_B}Let us consider the array
\[B\equiv{\footnotesize \begin{array}{|c|c|c|c|}\hline
1 & 2 & 3 & \cdot \\ \hline
\cdot & 4 & 5 & 6\\ \hline
9 & \cdot & 7 & 8 \\ \hline
11 & 12 & \cdot & 10 \\ \hline
\end{array}}\in\mathcal{A}(4,[12]).\]
We partition its set of entries in the following four ways.
\[\begin{array}{ccccccccc} {\footnotesize \begin{array}{|c|c|c|c|}\hline
\cellcolor{red!100}{\color{white} 1} & \cellcolor{red!100}{\color{white} 2} & \cellcolor{red!100}{\color{white} 3} & \cellcolor{red!100}{\color{white} \cdot} \\ \hline
\cellcolor{red!50}{\color{white} \cdot} & \cellcolor{red!50}{\color{white} 4} & \cellcolor{red!50}{\color{white} 5} & \cellcolor{red!50}{\color{white} 6}\\ \hline
\cellcolor{red!30}{\color{white} 9} & \cellcolor{red!30}{\color{white} \cdot} & \cellcolor{red!30}{\color{white} 7} & \cellcolor{red!30}{\color{white} 8} \\ \hline
\cellcolor{red!15}{\color{white} 11} & \cellcolor{red!15}{\color{white} 12} & \cellcolor{red!15}{\color{white} \cdot} & \cellcolor{red!15}{\color{white} 10} \\ \hline
\end{array}} &
{\footnotesize \begin{array}{|c|c|c|c|}\hline
\cellcolor{blue!100}{\color{white} 1} & \cellcolor{blue!50}{\color{white} 2} & \cellcolor{blue!30}{\color{white} 3} & \cellcolor{blue!15}{\color{white} \cdot}  \\ \hline
\cellcolor{blue!100}{\color{white} \cdot} & \cellcolor{blue!50}{\color{white} 4} & \cellcolor{blue!30}{\color{white} 5}  & \cellcolor{blue!15}{\color{white} 6} \\ \hline
\cellcolor{blue!100}{\color{white} 9} & \cellcolor{blue!50}{\color{white} \cdot} & \cellcolor{blue!30}{\color{white} 7}  & \cellcolor{blue!15}{\color{white} 8}  \\ \hline
\cellcolor{blue!100}{\color{white} 11} & \cellcolor{blue!50}{\color{white} 12} & \cellcolor{blue!30}{\color{white} \cdot}  & \cellcolor{blue!15}{\color{white} 10}  \\ \hline
\end{array}} & {\footnotesize \begin{array}{|c|c|c|c|}\hline
\cellcolor{orange!100}{\color{white} 1} & \cellcolor{orange!50}{\color{white} 2}  & \cellcolor{orange!30}{\color{white} 3}  & \cellcolor{orange!15}{\color{white} \cdot}  \\ \hline
\cellcolor{orange!15}{\color{white} \cdot}  & \cellcolor{orange!100}{\color{white} 4}  & \cellcolor{orange!50}{\color{white} 5}  & \cellcolor{orange!30}{\color{white} 6} \\ \hline
\cellcolor{orange!30}{\color{white} 9}  & \cellcolor{orange!15}{\color{white} \cdot} & \cellcolor{orange!100}{\color{white} 7}  & \cellcolor{orange!50}{\color{white} 8}  \\ \hline
\cellcolor{orange!50}{\color{white} 11}  & \cellcolor{orange!30}{\color{white} 12}  & \cellcolor{orange!15}{\color{white} \cdot}  & \cellcolor{orange!100}{\color{white} 10} \\ \hline
\end{array}} & {\footnotesize \begin{array}{|c|c|c|c|}\hline
\cellcolor{black!100}{\color{white} 1} & \cellcolor{black!50}{\color{white} 2} & \cellcolor{black!30}{\color{white} 3} & \cellcolor{black!15}{\color{white} \cdot}  \\ \hline
\cellcolor{black!50}{\color{white} \cdot} & \cellcolor{black!30}{\color{white} 4}  & \cellcolor{black!15}{\color{white} 5} & \cellcolor{black!100}{\color{white} 6}\\ \hline
\cellcolor{black!30}{\color{white} 9}  & \cellcolor{black!15}{\color{white} \cdot} & \cellcolor{black!100}{\color{white} 7} & \cellcolor{black!50}{\color{white} 8} \\ \hline
\cellcolor{black!15}{\color{white} 11} & \cellcolor{black!100}{\color{white} 12} & \cellcolor{black!50}{\color{white} \cdot} & \cellcolor{black!30}{\color{white} 10}  \\ \hline
\end{array} }\\
\mathrm{Part}_{\mathrm{row}}(B) & \mathrm{Part}_{\mathrm{col}}(B) & \mathrm{Part}_{\mathrm{diag}}(B) & \mathrm{Part}_{\mathrm{Adiag}}(B)
\end{array}
\]
In particular, $\mathcal{D}_B:=\left\{\mathrm{Part}_{\mathrm{row}}(B),\,\mathrm{Part}_{\mathrm{col}}(B),\,\mathrm{Part}_{\mathrm{diag}}(B)\right\}\in\mathrm{Aff}_3(B)$. Note that any two non parallel classes in $\mathrm{Part}_{\mathrm{diag}}(B)$ and $\mathrm{Part}_{\mathrm{Adiag}}(B)$ intersect in either two or zero points. Hence, the set $\{\mathrm{Part}_{\mathrm{row}}(B),$ $\mathrm{Part}_{\mathrm{col}}(B),$ $\mathrm{Part}_{\mathrm{diag}}(B),$ $\mathrm{Part}_{\mathrm{Adiag}}(B)\}$ is not an affine design. \hfill $\lhd$
\end{example}

\subsection{Partial loops}

A {\em partial loop} over the set $\widetilde{S}$ is any pair $(\widetilde{S},+)$ where $+$ is a partial binary operation over $\widetilde{S}$ such that the following statements hold.
\begin{enumerate}
    \item Each equation $s+x=t$ and $x+s=t$ has at most one solution $x\in \widetilde{S}$, for all $s,t\in \widetilde{S}$.
    \item $0+s = s+0=s$, for all $s\in \widetilde{S}$. (From the first statement, $0$ is the unique element in $\widetilde{S}$ satisfying this condition. Hence, it constitutes the {\em unity element} of the partial loop.)
    \item $s+(-s)=(-s)+s=0$, for all $s\in S$. (Again from the first statement, $-s$ is the unique element in $\widetilde{S}$ satisfying this condition. Hence, it constitutes the {\em inverse element} of $s$.)
\end{enumerate}
It is a {\em loop} if there exists exactly one solution for each equation in the first statement, whereas it is a {\em (partial) group} if the operation $+$ is associative. Its Cayley table is a partially filled array taking the value $\cdot$ in a cell $(s,t)$ whenever the operation  $s+t$ is not defined for some  $s,t\in \widetilde{S}$. By abuse of notation, we represent this fact as $s+t=\cdot$. Even more, we also define $\cdot + s = s+ \cdot = \cdot$, for all $s\in \widetilde{S}\cup\{\cdot\}$.

The Cayley table of a (partial) loop is also termed a (partial) loop. It is a particular type of {\em (partial) Latin square} of order $n$ over $\widetilde{S}$. That is, each symbol in $\widetilde{S}$ appears at most once per row, and at most once per column. (Exactly once if $(\widetilde{S},+)$ is a loop, in whose case, its Cayley table is a Latin square.) From here on, we denote by $\mathcal{L}(\widetilde{S})$ the set of Cayley tables of partial loops over $\widetilde{S}$. If there is risk of confusion, then we denote the partial binary operation of a partial loop $L\in\mathcal{L}(\widetilde{S})$ by $+_L$.

Every permutation $\pi\in\mathrm{Sym}(\widetilde{S})$ constitutes an {\em isomorphism} from a partial loop $L\in\mathcal{L}(\widetilde{S})$  to the partial loop $L^\pi\in\mathcal{L}(\widetilde{S})$ that is defined so that
\begin{equation}
    \label{eq:entries}\mathrm{Ent}(L^\pi):=\left\{(\pi(i),\,\pi(j),\,\pi(L[i,j]))\colon\, (i,j,L[i,j])\in\mathrm{Ent}(L)\right\}.
\end{equation}
If $L^\pi=L$, then $\pi$ is an {\em automorphism} of $L$. The set formed by all the automorphisms of $L$ has group structure. It is called the {\em automorphism group} of $L$, and is denoted by $\mathrm{Aut}(L)$.

\section{Sum polynomials}\label{sec:BlockSum}

Unlike abelian groups, over which Heffter arrays are usually defined, the general lack of commutativity and associativity in partial loops forces us to be careful with the way in which the summation of symbols in a Heffter array over a partial loop is defined.  Non-associative polynomials can be used to formalize this. More specifically, for each ordered subset of symbols $T:=\{t_1,\ldots,t_m\}\subseteq S$, we define the set of variables $X_T:=\{x_{t_1},\ldots,x_{t_m}\}$. A {\em $T$-sum polynomial} is any non-associative all one polynomial
\begin{equation}   \label{eq_Bsum}
\left(\ldots\left(x_{\pi(t_1)}+ \ldots+\left(x_{\pi(t_i)}+x_{\pi(t_{i+1})}\right)\ldots+x_{\pi(t_m)}\right)\ldots\right)
\end{equation}
without constant term, where $\pi\in\mathrm{Sym}(T)$, the parentheses arrangement is well-balanced, and each variable in $X_T$ appears exactly once in the polynomial. Particularly, we call {\em natural} to the $T$-sum polynomial
\begin{equation}   \label{eq_sum_natural}
\nu_{T}:=\left(\ldots(((x_{t_1}+x_{t_2})+x_{t_3})+x_{t_4})+\ldots\right)+x_{t_m}.
\end{equation}
More generally, we term {\em sum polynomial} over $S$ to any $T$-sum polynomial, with $T\subseteq S$. Thus, for instance, $(x_2+(x_4+x_3)+x_1)$, $(x_1+x_2)+(x_3+x_4)$ and $\nu_{[4]}=(((x_1+x_2)+x_3)+x_4)$ are three $[4]$-sum polynomials. The third one is natural.

Every $T$-sum polynomial over $S$ describes a way to sum all the elements in $T$ from any given partial loop over $\widetilde{S}$. More specifically, if $f$ is the sum polynomial described in (\ref{eq_Bsum}), then we define the map
{\small\begin{equation}\label{eq_sigma}\begin{array}{cccc}
\sigma_f: & \mathcal{L}(\widetilde{S}) & \to & \widetilde{S}\cup\{\cdot\}\\
& L & \mapsto & \left(\ldots\left(\pi(t_1)+_L \ldots+_L\left(\pi(t_i)+_L \pi(t_{i+1})\right)\ldots+_L \pi(t_m)\right)\ldots\right)
\end{array}
\end{equation}}

\noindent where each variable $x_{\pi(t_i)}$ in $f$ has been replaced by the corresponding symbol $\pi(t_i)\in T$, and where every sum is computed according to $L$. If the latter is an abelian (partial) group, then this arrangement is irrelevant, and $\sigma_f(L)$ only depends on the symbols indexing the variables in $f$. In this last regard, for each set $P$ of sum polynomials over $S$, we define the {\em support} of $P$ as the subset  $\mathrm{Support}(P)\subseteq S$ that is formed by all the symbols indexing any variable in any sum polynomial in $P$. Then, we say that a partial loop $L\in\mathcal{L}(\widetilde{S})$ is {\em compatible} over $P$, or {\em $P$-compatible}, if the following two statements hold.
\begin{enumerate}
    \item $\sigma_f(L)\in\widetilde{S}$ for all $f\in P$; and

    \item $\sigma_{f}(L)=\sigma_{g}(L)$ for all $f,\,g\in P$ such that $\mathrm{Support}\left(\left\{f\right\}\right)=\mathrm{Support}(\{g\})$.
\end{enumerate}

Of course, abelian groups are compatible over any set of sum polynomials. The next lemma holds readily from the previous definition.

\begin{lemma}\label{lemma_F} Let $P$ and $Q$ be two sets of sum polynomials over $S$, and let $L\in\mathcal{L}(\widetilde{S})$ be $P$-compatible. Then, the following statements hold.
\begin{enumerate}
    \item If $Q\subseteq P$, then $L$ is also $Q$-compatible.
    \item If $L$ is  $Q$-compatible and $\mathrm{Support}(P)\cap\mathrm{Support}(Q)=\emptyset$, then $L$ is $(P\cup Q)$-compatible.
    \item Let $f\in Q$ be such that $\mathrm{Support}(\{f\})=\mathrm{Support}(\{g\})$ for some $g\in P$. If $L$ is an abelian partial group, then it is $\{g\}$-compatible.
\end{enumerate}
\end{lemma}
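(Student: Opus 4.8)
The plan is, in each of the three cases, to verify the two defining conditions of compatibility directly: namely, that every sum polynomial in the set under consideration evaluates under $L$ to an element of $\widetilde{S}$ (and not to $\cdot$), and that any two such polynomials sharing the same support evaluate to the same element of $\widetilde{S}$. Statement (1) requires nothing more than the remark that both of these conditions are universally quantified over the members of $P$; restricting the quantifiers to a subset $Q\subseteq P$ reproduces verbatim the two conditions defining $Q$-compatibility, so $L$ is $Q$-compatible.

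For statement (2) I would treat the two conditions separately for the set $P\cup Q$. The first is handled by a case split: any $f\in P\cup Q$ belongs to $P$ or to $Q$, and the assumed $P$- or $Q$-compatibility then gives $\sigma_f(L)\in\widetilde{S}$. The second condition is where the hypothesis $\mathrm{Support}(P)\cap\mathrm{Support}(Q)=\emptyset$ is used. If $f,g\in P\cup Q$ satisfy $\mathrm{Support}(\{f\})=\mathrm{Support}(\{g\})$ but one lay in $P$ and the other in $Q$, then their common support would sit inside $\mathrm{Support}(P)\cap\mathrm{Support}(Q)=\emptyset$; since every sum polynomial contains at least one variable and hence has non-empty support, this is impossible. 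Thus every equal-support pair lies entirely within $P$ or entirely within $Q$, and $\sigma_f(L)=\sigma_g(L)$ follows from the corresponding compatibility hypothesis. Together these establish $(P\cup Q)$-compatibility.

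For statement (3) the key ingredient is the fact, recorded immediately after the definition of $\sigma_f$, that over an abelian partial group the value $\sigma_f(L)$ depends only on the symbols indexing the variables of $f$, that is, on $\mathrm{Support}(\{f\})$, and not on their order or parenthesization. Hence $\mathrm{Support}(\{f\})=\mathrm{Support}(\{g\})$ gives $\sigma_f(L)=\sigma_g(L)$, and since $g\in P$ with $L$ being $P$-compatible we have $\sigma_g(L)\in\widetilde{S}$, so $\sigma_f(L)\in\widetilde{S}$ as well; the second compatibility condition is vacuous for the singleton $\{f\}$, so $L$ is $\{f\}$-compatible. (I read the printed conclusion ``$\{g\}$-compatible'' as a typo for ``$\{f\}$-compatible'': the former follows at once from statement (1), using neither the abelian hypothesis nor $f$.) The one step needing care is invoking arrangement-independence for a \emph{partial} operation, where reordering or re-bracketing must not silently destroy definedness; this is precisely the property the paper attributes to abelian partial groups in the discussion preceding the lemma, so it may be used directly, and I expect it to be the only non-formal point in the whole proof.
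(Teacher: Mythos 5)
Your proof is correct and takes the same (and only) route as the paper, which offers no written argument at all: it merely states that the lemma ``holds readily from the previous definition,'' and your direct verification of the two compatibility conditions---restriction of quantifiers for (1), the empty-intersection argument ruling out mixed equal-support pairs for (2), and arrangement-independence over abelian partial groups for (3)---is precisely the intended unpacking. Your reading of the conclusion of (3) as a typo for ``$\{f\}$-compatible'' is sound (the literal conclusion is an immediate consequence of statement (1), making the abelian hypothesis and $f$ superfluous), and your reliance on the paper's own assertion that $\sigma_f(L)$ depends only on $\mathrm{Support}(\{f\})$ when $L$ is an abelian partial group is exactly what the paper licenses.
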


\begin{example}\label{example_BlockSum} Suppose that we are interested in doing a summation of all the elements of the first row in the array $A\in\mathcal{A}(3,[9])$ described in Example \ref{example_Affine}. This summation can be done according to exactly one of the following $12$ distinct $[3]$-sum polynomials.
\[
\begin{array}{lll}
 f_1:=\nu_{[3]}
=(x_1+x_2)+x_3 & f_2:=(x_2+x_1)+x_3  & f_3:=(x_1+x_3)+x_2 \\
  f_4:=(x_3+x_1)+x_2 & f_5:=(x_2+x_3)+x_1 & f_6:=(x_3+x_2)+x_1  \\
     f_7:=x_1+(x_2+x_3)& f_8:=x_1+(x_3+x_2)&f_9:=x_2+(x_1+x_3)  \\
  f_{10}:=x_2+(x_3+x_1)& f_{11}:=x_3+(x_1+x_2)& f_{12}:=x_3+(x_2+x_1)  \\
\end{array}
\]
Each one of them describes a way to sum all the elements in the set $[3]$ from any given partial loop over the set $\widetilde{[9]}$. To illustrate this fact, we consider the following partial loop in $\mathcal{L}(\widetilde{[9]})$.

{\scriptsize \[L\equiv  \begin{array}{|r||c|c|c|c|c|c|c|c|c|c|c|c|c|c|c|c|c|c|c|}\hline
0 & 1 & 2 & 3 & 4 & 5 & 6 & 7 & 8 & 9 & -9 & -8 & -7 & -6 & -5 & -4 & -3 & -2 & -1 \\
\hline \hline
1 & \cdot & -3 & 2 & 3 & \cdot & \cdot & \cdot & \cdot & \cdot & \cdot & \cdot & \cdot & \cdot & \cdot & \cdot & \cdot & \cdot & 0\\ \hline
2 & -1 & 1 & 4 & \cdot & \cdot & \cdot & \cdot & \cdot & \cdot & \cdot & \cdot & \cdot & \cdot & \cdot & \cdot & \cdot & 0 & \cdot\\ \hline
3 & -2 & -4 & \cdot & \cdot & \cdot & \cdot & \cdot & \cdot & \cdot & \cdot & \cdot & \cdot & \cdot & \cdot & \cdot & 0 & \cdot & 1\\ \hline
4 & \cdot & 3 & \cdot & \cdot & \cdot & \cdot & \cdot & \cdot & \cdot & \cdot & \cdot & \cdot & \cdot & \cdot & 0 & \cdot & \cdot & \cdot\\ \hline
5 & \cdot & \cdot & \cdot & \cdot & \cdot & \cdot & \cdot & \cdot & \cdot & \cdot & \cdot & \cdot & \cdot & 0 & \cdot & \cdot & \cdot & \cdot\\ \hline
6 & \cdot & \cdot & \cdot & \cdot & \cdot & \cdot & \cdot & \cdot & \cdot & \cdot & \cdot & \cdot & 0 & \cdot & \cdot & \cdot & \cdot & \cdot\\ \hline
7 & \cdot & \cdot & \cdot & \cdot & \cdot & \cdot & \cdot & \cdot & \cdot & \cdot & \cdot & 0 & \cdot & \cdot & \cdot & \cdot & \cdot & \cdot\\ \hline
8 & \cdot & \cdot & \cdot & \cdot & \cdot & \cdot & \cdot & \cdot & \cdot & \cdot & 0 & \cdot & \cdot & \cdot & \cdot & \cdot & \cdot & \cdot\\ \hline
9 & \cdot & \cdot & \cdot & \cdot & \cdot & \cdot & \cdot & \cdot & \cdot & 0 & \cdot & \cdot & \cdot & \cdot & \cdot & \cdot & \cdot & \cdot\\ \hline
-9 & \cdot & \cdot & \cdot & \cdot & \cdot & \cdot & \cdot & \cdot & 0 & \cdot & \cdot & \cdot & \cdot & \cdot & \cdot & \cdot & \cdot & \cdot \\ \hline
-8 & \cdot & \cdot & \cdot & \cdot & \cdot & \cdot & \cdot & 0 & \cdot & \cdot & \cdot & \cdot & \cdot & \cdot & \cdot & \cdot & \cdot & \cdot \\ \hline
-7 & \cdot & \cdot & \cdot & \cdot & \cdot & \cdot & 0 & \cdot & \cdot & \cdot & \cdot & \cdot & \cdot & \cdot & \cdot & \cdot & \cdot & \cdot \\ \hline
-6 & \cdot & \cdot & \cdot & \cdot & \cdot & 0 & \cdot & \cdot & \cdot & \cdot & \cdot & \cdot & \cdot & \cdot & \cdot & \cdot & \cdot & \cdot \\ \hline
-5 & \cdot & \cdot & \cdot & \cdot & 0 & \cdot & \cdot & \cdot & \cdot & \cdot & \cdot & \cdot & \cdot & \cdot & \cdot & \cdot & \cdot & \cdot \\ \hline
-4 & 2 & \cdot & \cdot & 0 & \cdot & \cdot & \cdot & \cdot & \cdot & \cdot & \cdot & \cdot & \cdot & \cdot & \cdot & \cdot & \cdot & \cdot \\ \hline
-3  & \cdot & \cdot & 0 & \cdot & \cdot & \cdot & \cdot & \cdot & \cdot & \cdot & \cdot & \cdot & \cdot & \cdot & \cdot & \cdot & \cdot & \cdot \\ \hline
-2 & \cdot & 0 & \cdot & \cdot & \cdot & \cdot & \cdot & \cdot & \cdot & \cdot & \cdot & \cdot & \cdot & \cdot & \cdot & \cdot & \cdot & \cdot \\ \hline
-1 & 0 & \cdot & 1 & \cdot & \cdot & \cdot & \cdot & \cdot & \cdot & \cdot & \cdot & \cdot & \cdot & \cdot & \cdot & \cdot & \cdot & \cdot \\ \hline
\end{array}\]}

We have for instance that
\[\sigma_{f_1}(L)=(1+_L 2)+_L 3=L[L[1,2],3]=L[-3,3]=0.\]
In a similar way,
\[\sigma_{f_1}(L)=\sigma_{f_4}(L)=\sigma_{f_{10}}(L)=\sigma_{f_{11}}(L)=0, \hspace{2cm} \sigma_{f_5}= \sigma_{f_8}(L)=\cdot,\]
\[\sigma_{f_2}(L)=\sigma_{f_3}(L)=\sigma_{f_9}(L)=\sigma_{f_{12}}(L)=1,\hspace{0.4cm} \sigma_{f_6}(L)=2 \hspace{0.4cm} \text{ and }  \hspace{0.4cm} \sigma_{f_7}(L)=3.\]
Hence, $L$ is $P$-compatible for any set of sum polynomials $P\in\{\{f_1,\,f_4,\,f_{10},$ $\,f_{11}\},\, \{f_2,\,f_3,\,f_9,\,f_{12}\},\, \{f_6\},\, \{f_7\}\}$. \hfill $\lhd$
\end{example}

\vspace{0.25cm}

The automorphism group of a partial loop plays a relevant role to determine sum polynomials over which the partial loop is compatible. To see it, let $f$ be the sum polynomial described in (\ref{eq_Bsum}). For each permutation $\rho\in \mathrm{Sym}(\widetilde{S})$ such that $\rho(s)\in S$ for all $s\in\mathrm{Support}(\{f\})$, we define the sum polynomial
{\small \begin{equation}   \label{eq_Bsum_pi}
f^\rho:= \left(\ldots\left(x_{\rho(\pi(t_1))}+ \ldots+\left(x_{\rho(\pi(t_i))}+x_{\rho(\pi(t_{i+1}))}\right)\ldots+x_{\rho(\pi(t_m))}\right)\ldots\right)
\end{equation}}

\noindent where the parentheses arrangement coincides with that of (\ref{eq_Bsum}). In addition, for each set $P$ of sum polynomials over $S$, we define the set $P^{\rho}:=\left\{f^{\rho}\colon\,f\in P\right\}$. The following result shows how this permutation preserves the compatibility of the partial loop under consideration.

\begin{lemma}\label{lemma_isopolynomial}  Let $P$ be a set of sum polynomials over $S$. If $L\in \mathcal{L}(\widetilde{S})$ and $\pi\in \mathrm{Sym}(\widetilde{S})$ satisfies that $\mathrm{Support}(P^\pi)\subseteq S$, then the following statements hold.
\begin{enumerate}
    \item $\sigma_{f^\pi}(L^{\pi})=\pi(\sigma_f(L))$ for all $f\in P$.
\item $L$ is $P$-compatible if and only if $L^{\pi}$ is $P^\pi$-compatible.
\end{enumerate}
\end{lemma}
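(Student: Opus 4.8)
The plan is to isolate a single ``transport identity'' describing how the operation of $L^{\pi}$ relates to that of $L$, prove statement (1) by induction on the tree structure of the polynomial, and then read off statement (2) purely formally. Throughout I extend $\pi$ to $\widetilde{S}\cup\{\cdot\}$ by declaring $\pi(\cdot):=\cdot$, which is consistent with the convention $\cdot+s=s+\cdot=\cdot$.

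First I would record the transport identity. By the definition of $L^{\pi}$ in (\ref{eq:entries}), we have $L^{\pi}[\pi(i),\pi(j)]=\pi(L[i,j])$ for every entry of $L$, equivalently
\[
a +_{L^{\pi}} b = \pi\bigl(\pi^{-1}(a) +_{L} \pi^{-1}(b)\bigr)
\qquad\text{for all } a,b\in\widetilde{S}\cup\{\cdot\}.
\]
This is the only place where the definition of $L^{\pi}$ is used, and it holds uniformly even when the right-hand sum is undefined, since $\pi$ fixes $\cdot$.

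For statement (1) I would first recast a sum polynomial recursively as a binary tree: either a single leaf $x_{s}$, or $(g+h)$ where $g$ and $h$ are sum polynomials on disjoint variable sets. Under this view, the operation $f\mapsto f^{\pi}$ of (\ref{eq_Bsum_pi}) preserves the tree shape and relabels each leaf $x_{s}$ as $x_{\pi(s)}$, while $\sigma_{f}$ of (\ref{eq_sigma}) evaluates the tree by sending each leaf to its symbol and each $+$ to $+_{L}$. The base case is immediate: $\sigma_{(x_{s})^{\pi}}(L^{\pi})=\pi(s)=\pi(\sigma_{x_{s}}(L))$. For the inductive step $f=(g+h)$, the induction hypothesis gives $\sigma_{g^{\pi}}(L^{\pi})=\pi(\sigma_{g}(L))$ and $\sigma_{h^{\pi}}(L^{\pi})=\pi(\sigma_{h}(L))$, and the transport identity then collapses
\[
\sigma_{f^{\pi}}(L^{\pi})
=\pi(\sigma_{g}(L)) +_{L^{\pi}} \pi(\sigma_{h}(L))
=\pi\bigl(\sigma_{g}(L)+_{L}\sigma_{h}(L)\bigr)
=\pi(\sigma_{f}(L)),
\]
establishing (1). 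The hypothesis $\mathrm{Support}(P^{\pi})\subseteq S$ is exactly what guarantees that each $f^{\pi}$ is a legitimate sum polynomial, so (\ref{eq_Bsum_pi}) applies.

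Statement (2) follows formally from (1). Since $f^{\pi}$ has its variables indexed by $\pi(s)$ for $s$ in the support of $f$, we have $\mathrm{Support}(\{f^{\pi}\})=\pi(\mathrm{Support}(\{f\}))$; as $\pi$ is a bijection, $\mathrm{Support}(\{f^{\pi}\})=\mathrm{Support}(\{g^{\pi}\})$ holds if and only if $\mathrm{Support}(\{f\})=\mathrm{Support}(\{g\})$. Because $\pi$ maps $\widetilde{S}$ bijectively onto itself and fixes $\cdot$, statement (1) yields $\sigma_{f^{\pi}}(L^{\pi})\in\widetilde{S}$ if and only if $\sigma_{f}(L)\in\widetilde{S}$, which matches condition (1) of compatibility, and $\sigma_{f^{\pi}}(L^{\pi})=\sigma_{g^{\pi}}(L^{\pi})$ if and only if $\sigma_{f}(L)=\sigma_{g}(L)$, which matches condition (2). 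Since these equivalences hold for every $f,g$, the two compatibility conditions transfer in both directions, so $L$ is $P$-compatible if and only if $L^{\pi}$ is $P^{\pi}$-compatible. The only real obstacle is bookkeeping rather than depth: one must be scrupulous in extending $\pi$ to $\cdot$ and checking that the transport identity survives undefined intermediate sums, and one must recast the explicit parenthesization of (\ref{eq_Bsum}) into a genuinely recursive definition so that the structural induction in (1) has a clean base case and inductive step.
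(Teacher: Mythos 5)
Your proof is correct and follows essentially the same route as the paper, which simply asserts that statement (1) ``holds readily'' from (\ref{eq:entries}), (\ref{eq_sigma}) and (\ref{eq_Bsum_pi}) and that (2) then follows; your transport identity and structural induction are precisely the details being waved at there, and your formal derivation of (2) from (1) matches the paper's intent. The extra care you take in extending $\pi$ by $\pi(\cdot):=\cdot$ so the identity survives undefined sums is a worthwhile refinement of the paper's terse argument, not a departure from it.
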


\begin{proof} The first statement holds readily from (\ref{eq:entries}),  (\ref{eq_sigma}) and (\ref{eq_Bsum_pi}). Since $L^\pi\in\mathcal{L}(\widetilde{S})$. Then, the second one follows straightforwardly.
\end{proof}

\vspace{0.25cm}

Now, for each set $P$ of sum polynomials over $S$ and each partial loop $L\in\mathcal{L}(\widetilde{S})$, we define the subgroup
\[\mathrm{Aut}_P(L):=\left\{\pi\in\mathrm{Aut}(L)\colon\,\mathrm{Support}(P^\pi)\subseteq S\right\}\leq \mathrm{Aut}(L).\]
The following result holds readily from the second statements of Lemmas \ref{lemma_F} and \ref{lemma_isopolynomial}.

\begin{proposition}\label{proposition_isopolynomial}  If the partial Latin loop $L$ is $P$-compatible, then it is also $\bigcup_{\pi\in\mathrm{Aut}_P(L)}P^\pi$-compatible.
\end{proposition}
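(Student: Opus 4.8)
The plan is to unwind the definition of compatibility for the enlarged family $R:=\bigcup_{\pi\in\mathrm{Aut}_P(L)}P^\pi$ and to verify its two defining conditions separately, transporting everything back to the already-known $P$-compatibility of $L$ by means of the automorphisms. The single observation that drives the whole argument is that every $\pi\in\mathrm{Aut}_P(L)$ is in particular an automorphism of $L$, so that $L^\pi=L$; this lets me replace $L^\pi$ by $L$ wherever the two lemmas produce it, and it is exactly what converts statements about $L^\pi$ into statements about $L$ itself.

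First I would dispatch the well-definedness condition. Fix $\pi\in\mathrm{Aut}_P(L)$. By definition of $\mathrm{Aut}_P(L)$ we have $\mathrm{Support}(P^\pi)\subseteq S$, so the second statement of Lemma~\ref{lemma_isopolynomial} applies and tells me that $L^\pi$ is $P^\pi$-compatible; since $L^\pi=L$, the loop $L$ itself is $P^\pi$-compatible. In particular $\sigma_h(L)\in\widetilde S$ for every $h\in P^\pi$. As every element of $R$ lies in some $P^\pi$, this yields $\sigma_h(L)\in\widetilde S$ for all $h\in R$, which is the first condition of $R$-compatibility.

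The delicate point is the second condition: whenever $h,h'\in R$ share the same support I must show $\sigma_h(L)=\sigma_{h'}(L)$. Writing $h=f^\pi$ and $h'=g^\rho$ with $f,g\in P$ and $\pi,\rho\in\mathrm{Aut}_P(L)$, the first statement of Lemma~\ref{lemma_isopolynomial}, together with $L^\pi=L^\rho=L$, rewrites the desired identity as $\pi(\sigma_f(L))=\rho(\sigma_g(L))$. When $h$ and $h'$ come from one and the same translate $P^\pi$, this is immediate from the $P^\pi$-compatibility established above. The remaining case, in which they originate from different translates, is where I expect the real work to lie: here I would use that $\mathrm{Aut}_P(L)$ is a group in order to apply $\pi^{-1}$ and pull both polynomials back to the common support $\mathrm{Support}(\{f\})$, reducing the claim to an equality of block-sums of $P$-polynomials on that support, which the second condition of $P$-compatibility would then deliver.

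The step I expect to be the main obstacle is precisely this pull-back: while $f$ lands back in $P$, its partner $g^{\pi^{-1}\rho}$ a priori only lands in $R$, not necessarily in $P$, so the second condition of $P$-compatibility cannot be invoked verbatim. Bridging this gap is exactly where one must either appeal to the second statement of Lemma~\ref{lemma_F}—after checking that the supports being merged are disjoint at each stage—or argue that equal-support polynomials coming from different translates always descend, under the group action of $\mathrm{Aut}_P(L)$, to genuinely $P$-internal equal-support pairs. Making this descent precise is the heart of the proof.
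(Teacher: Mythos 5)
You have reconstructed everything the paper actually provides. The paper's entire proof is the sentence preceding the statement: the proposition ``holds readily from the second statements of Lemmas~\ref{lemma_F} and~\ref{lemma_isopolynomial}''. That is exactly the route you set up: Lemma~\ref{lemma_isopolynomial}(2) together with $L^{\pi}=L$ gives $P^{\pi}$-compatibility of $L$ for each $\pi\in\mathrm{Aut}_P(L)$ (your condition~(1) and the same-translate half of condition~(2)), and Lemma~\ref{lemma_F}(2) is then supposed to merge these into compatibility over the union. But, as you observe, Lemma~\ref{lemma_F}(2) requires $\mathrm{Support}(P)\cap\mathrm{Support}(P^{\pi})=\emptyset$, which fails in general (the translates typically overlap), and overlapping translates are precisely the situation in which the cross-translate half of condition~(2) has content. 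So the obstacle you flag is not a defect of your write-up; it is a gap in the paper's own argument. (A smaller soft spot: your pull-back invokes the group structure of $\mathrm{Aut}_P(L)$, asserted by the paper, but $\mathrm{Support}(P^{\pi})\subseteq S$ does not imply $\mathrm{Support}(P^{\pi^{-1}})\subseteq S$, so even closure under inversion is unclear.)

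Moreover, the descent you were looking for cannot exist, because the statement read literally is false. Let $L$ be the nonabelian group of order $21$, written additively: generators $u$ of order $7$ and $v$ of order $3$ with $v+u-v=2u$. All elements have odd order, so $L\in\mathcal{L}(\widetilde{S})$ for any transversal $S$ of the pairs $\{g,-g\}$; choose $S\supseteq\{u,2u,4u,v\}$. Put $P=\{f_0,g_0\}$ with $f_0=(x_u+x_v)+x_{4u}$ and $g_0=(x_u+x_v)+x_{2u}$. Their supports differ, so $L$ is vacuously $P$-compatible. The inner automorphism $\pi:=v+(\,\cdot\,)-v$ satisfies $\pi(u)=2u$, $\pi(2u)=4u$, $\pi(4u)=u$ and $\pi(v)=v$, hence $\mathrm{Support}(P^{\pi})=\{u,2u,4u,v\}\subseteq S$ and $\pi\in\mathrm{Aut}_P(L)$. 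Now $f_0^{\pi}=(x_{2u}+x_v)+x_u$ and $g_0$ have equal support $\{u,2u,v\}$, yet $\sigma_{f_0^{\pi}}(L)=(2u+v)+u=4u+v$ while $\sigma_{g_0}(L)=(u+v)+2u=5u+v$; so $L$ is not $(P\cup P^{\pi})$-compatible, and a fortiori not compatible over $\bigcup_{\pi\in\mathrm{Aut}_P(L)}P^{\pi}$. What rescues the paper is that the proposition is only ever invoked (in Propositions~\ref{proposition_isopolynomial_ab} and~\ref{proposition_isopolynomial_kp}) for sets $P$ all of whose polynomials have zero sum over $L$; every automorphism fixes $0$, so both sides of your target identity $\pi(\sigma_f(L))=\rho(\sigma_g(L))$ vanish and the cross-translate case becomes trivial. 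The way to complete your proof --- and to repair the statement --- is to add the hypothesis that each $\sigma_f(L)$, $f\in P$, is a fixed point of every $\pi\in\mathrm{Aut}_P(L)$ (for instance, that all sums in $P$ are zero).
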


\section{A family of sum polynomials describing zero sums from non-associative loops}\label{sec:Example}

In order to illustrate the concepts and results described in the previous section, we construct here a family of sum polynomials describing zero sums over non-associative loops of order $kp$, where $p$ is an odd prime and $k\neq 2$ is a positive integer. To this end, we distinguish two constructions: a basic one focused on the case $k=1$, and the general construction.

\subsection{The basic construction}

Let $r$ be a primitive root of $p$. Then, we define the $p\times p$ array $H_{p,r}$ over $\mathbb{Z}_p$ so that
\begin{equation}
    \label{eq_EqHpr} \mathrm{Ent}(H_{p,r}):=\left\{\left(i,j, j+\frac{i-j}r\right)\colon\, i,j\in \mathbb{Z}_p\right\}.
\end{equation}
It is {\em diagonally cyclic}. That is, the permutation $i\mapsto a\cdot i+b$ in $\mathrm{Sym}(\mathbb{Z}_p)$ is an automorphism of $H_{p,r}$ for all $a,b\in\mathbb{Z}_p$, with $a\neq 0$. In addition, since $r$ is a primitive root of $p$, both sets of symbols in the first row and first column of $H_{p,r}$ coincide with $\mathbb{Z}_p$. Hence, the array $H_{p,r}$ is indeed a Latin square. It is symmetric if and only if $r=2$. Due to the cyclic character of the array, this fact can be proved by studying the coincidence of symbols in symmetric cells within its first row and first column. In particular, if $i\neq 0$, then
\[i-\frac ir = \frac ir \Leftrightarrow i = \frac {2i}r \Leftrightarrow r=2.\]

\begin{example}\label{example_0} We have that
\[H_{7,3}\equiv{\footnotesize \begin{array}{|c|c|c|c|c|c|c|}\hline
         \cellcolor{red!100}{\color{white} 0} &\cellcolor{blue!100}{\color{white} 3} &\cellcolor{green!100}{\color{white} 6}&\cellcolor{orange!100}{\color{white} 2}&\cellcolor{pink!100}{\color{white} 5}&\cellcolor{yellow!100}{\color{white} 1}&\cellcolor{purple!100}{\color{white} 4}  \\ \hline
         \cellcolor{purple!100}{\color{white} 5}&\cellcolor{red!100}{\color{white} 1}&\cellcolor{blue!100}{\color{white} 4}&\cellcolor{green!100}{\color{white} 0}&\cellcolor{orange!100}{\color{white} 3}&\cellcolor{pink!100}{\color{white} 6}&\cellcolor{yellow!100}{\color{white} 2}  \\ \hline
         \cellcolor{yellow!100}{\color{white} 3}&\cellcolor{purple!100}{\color{white} 6}&\cellcolor{red!100}{\color{white} 2} &\cellcolor{blue!100}{\color{white} 5}&\cellcolor{green!100}{\color{white} 1}&\cellcolor{orange!100}{\color{white} 4}&\cellcolor{pink!100}{\color{white} 0} \\ \hline
         \cellcolor{pink!100}{\color{white} 1}&\cellcolor{yellow!100}{\color{white} 4}&\cellcolor{purple!100}{\color{white} 0}&\cellcolor{red!100}{\color{white} 3} &\cellcolor{blue!100}{\color{white} 6}&\cellcolor{green!100}{\color{white} 2}&\cellcolor{orange!100}{\color{white} 5}  \\ \hline
         \cellcolor{orange!100}{\color{white} 6}&\cellcolor{pink!100}{\color{white} 2}&\cellcolor{yellow!100}{\color{white} 5}&\cellcolor{purple!100}{\color{white} 1}&\cellcolor{red!100}{\color{white} 4} &\cellcolor{blue!100}{\color{white} 0}&\cellcolor{green!100}{\color{white} 3} \\ \hline
         \cellcolor{green!100}{\color{white} 4}&\cellcolor{orange!100}{\color{white} 0}&\cellcolor{pink!100}{\color{white} 3}&\cellcolor{yellow!100}{\color{white} 6}&\cellcolor{purple!100}{\color{white} 2}&\cellcolor{red!100} {\color{white} 5}&\cellcolor{blue!100}{\color{white} 1}  \\ \hline
         \cellcolor{blue!100}{\color{white} 2}&\cellcolor{green!100}{\color{white} 5}&\cellcolor{orange!100}{\color{white} 1}&\cellcolor{pink!100}{\color{white} 4}&\cellcolor{yellow!100}{\color{white} 0}&\cellcolor{purple!100}{\color{white} 3}&\cellcolor{red!100}{\color{white} 6}  \\ \hline
    \end{array}}
    \]
where cells are colored to visualize the cyclic character of the array. \hfill $\lhd$
\end{example}

\vspace{0.2cm}

Now, we define the loop  $L_{p,r}\in\mathcal{L}(\widetilde{[p]})$ so that, for each $i,j\in [p]$,
{\small \[\begin{cases}\begin{array}{lll}
L_{p,r}[i,j] & :=-\left(H_{p,r}[i-1,j-1]+1\right),\\
L_{p,r}[i,-j]=L_{p,r}[-i,j] & :=H_{p,r}[i-1,j-1]+1, & \text{ if } i\neq j,\\
L_{p,r}[-i,-j] & :=-\left(H_{p,r}L_{p,r}[i,j][i-1,j-1]+1\right), & \text{ if } i\neq j,\\
L_{p,r}[-i,-i] & :=i.
\end{array}
\end{cases}\]}

\begin{example}\label{example_0a} We have that
\[L_{7,3}\equiv{\scriptsize\begin{array}{|c||c|c|c|c|c|c|c||c|c|c|c|c|c|c|} \hline
0&1&2&3&4&5&6&7&-7&-6&-5&-4&-3&-2&-1 \\ \hline\hline
    1&-1&-4&-7&-3&-6&-2&-5& 5&2&6&3&7&4&0 \\ \hline
    2& -6&-2&-5&-1&-4&-7&-3&3&7&4&1&5&0&6\\ \hline
    3&  -4&-7&-3&-6&-2&-5&-1&1&5&2&6&0&7&4\\ \hline
    4&   -2&-5&-1&-4&-7&-3&-6&6&3&7&0&1&5&2\\ \hline
    5&   -7&-3&-6&-2&-5&-1&-4&4&1&0&2&6&3&7\\ \hline
    6&    -5&-1&-4&-7&-3&-6&-2&2&0&3&7&4&1&5\\ \hline
    7&    -3&-6&-2&-5&-1&-4&-7&0&4&1&5&2&6&3\\ \hline\hline
    -7&3&6&2&5&1&4&0&7&-4&-1&-5&-2&-6&-3\\ \hline
    -6&5&1&4&7&3&0&2&-2&6&-3&-7&-4&-1&-5\\ \hline
    -5&7&3&6&2&0&1&4&-4&-1&5&-2&-6&-3&-7\\ \hline
    -4&2&5&1&0&7&3&6&-6&-3&-7&4&-1&-5&-2\\ \hline
    -3&4&7&0&6&2&5&1&-1&-5&-2&-6&3&-7&-4\\ \hline
    -2&6&0&5&1&4&7&3&-3&-7&-4&-1&-5&2&-6\\ \hline
    -1&0&4&7&3&6&2&5&-5&-2&-6&-3&-7&-4&1\\ \hline
 \end{array}}
\]
\hfill $\lhd$
\end{example}

\vspace{0.2cm}

The properties satisfied by $H_{p,r}$ imply the following statements.
\begin{itemize}
    \item[$\mathrm{(1)}$] The loop $L_{p,r}$ is symmetric if and only if $r = 2$.
    \item[$\mathrm{(2)}$] The loop $L_{p,r}$ is not a group  because, for each $i,j,k\in [p]$, we have that
\[\left(i+_{L_{p,r}} j\right)+_{L_{p,r}} k = \left(k-1+\frac {j+\frac {i-j}r-k}r\pmod p\right) + 1\]
and
\[i+_{L_{p,r}}\left( j+_{L_{p,r}} k\right) =
\left(k-1+\frac{j-k}r + \frac {i- k-\frac{j-k}r}r\pmod p \right)+ 1.\]
That is, $\left(i+_{L_{p,r}} j\right)+_{L_{p,r}} k = i+_{L_{p,r}}\left( j+_{L_{p,r}} k\right)$ if and only if
\begin{align*}
i-j\equiv ir-kr-j-k \pmod p & \Leftrightarrow i(1-r)\equiv k(1-r) \pmod p\\
& \Leftrightarrow i\equiv k \pmod p.
\end{align*}

\item[$\mathrm{(3)}$] The loop $L_{p,r}$ is  $\{\nu_{[p]}\}$-compatible, where $\nu_{[p]}$ is the natural $[p]$-sum polynomial. More precisely, for each positive integer $\ell<p$, we have that
\[\sigma_{\nu_{[\ell]}}(L_{p,r}) = (-1)^{\ell+1}\cdot \left(\left(\frac {r^{1-\ell}-1}{r-1}+\ell-1\pmod p\right)+1\right)\in\widetilde{[p]}.\]
It can be proven by induction on $\ell$. Thus, $\sigma_{\nu_{[1]}}(L_{p,r}) = 1$. Then, we assume that it holds for some $\ell<p-1$, and we show that it also holds for $\ell +1$.  To this end, note that
\[\left(\frac {r^{1-\ell}-1}{r-1}+\ell-1\pmod p\right)+1=\ell+1\Leftrightarrow r^{1-\ell}=r.\]
From Fermat's Little Theorem, this happens if and only if $\ell=p-1$. Then, by the induction hypothesis,
{\small \begin{align*}
\sigma_{\nu_{[\ell+1]}}(L_{p,r}) & =\sigma_{\nu_{[\ell]}}(L_{p,r}) +_{L_{p,r}} (\ell+1)=\\
&=(-1)^{\ell+1}\cdot\left(\left(\frac {r^{1-\ell}-1}{r-1}+\ell-1\pmod p\right) + 1\right)+_{L_{p,r}} (\ell+1)=\\
& =(-1)^{\ell+2}\cdot \left(H_{p,r}\left[\frac {r^{1-\ell}-1}{r-1}+\ell-1\,(\mathrm{mod}\, p),\,\ell\right]+1\right)=\\
& =(-1)^{\ell+2}\cdot \left(\left(\ell+\frac{\frac {r^{1-\ell}-1}{r-1}+\ell-1 -\ell}r\pmod p\right) +1 \right)=\\
& =(-1)^{\ell+2}\cdot \left(\left(\frac {r^{1-(\ell+1)}-1}{r-1}+\ell\pmod p\right) + 1\right).
\end{align*}}
As a consequence,
\begin{equation}\label{eq:zerosum}
\sigma_{\nu_{[p]}}(L_{p,r})=\sigma_{\nu_{[p-1]}}(L_{p,r}) +_{L_{p,r}} p=-p +_{L_{p,r}} p=0.
\end{equation}
\end{itemize}

\begin{example}\label{example_0b} We have that
\begin{align*}
\sigma_{\nu_{[7]}}(L_{7,3}) & =(((((1+2)+3)+4)+5)+6)+7=\\
& =((((-4+3)+4)+5)+6)+7=\\
& =(((1+4)+5)+6)+7=\\
& =((-3+5)+6)+7=\\
& =(2+6)+7=\\
& =-7+7=\\
&=0.
\end{align*}
\hfill $\lhd$
\end{example}

Now, let $a,b\in\mathbb{Z}_p$, with $a\neq 0$. The automorphism group $\mathrm{Aut}(L_{p,r})$ contains the permutation $\pi_{(a,b)}\in\overline{\mathrm{Sym}}(\widetilde{[p]})$ that is defined so that for each $i\in [p]$,
\begin{equation}\label{eq:perm_natural_ab}
\pi_{(a,b)}(i):=\left(a(i-1)+b \pmod p\right)+1.
\end{equation}

\begin{proposition}\label{proposition_isopolynomial_ab} The non-associative loop $L_{p,r}$ is $\{\nu^{\pi_{(a,b)}}_{[p]}\colon\, a,b\in\mathbb{Z}_p,\, a\neq 0\}$-compatible.  More precisely, every sum polynomial in the set describes a zero sum in $[p]$.
\end{proposition}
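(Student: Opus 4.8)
The plan is to derive everything from the single fact already established for the natural polynomial, namely that $L_{p,r}$ is $\{\nu_{[p]}\}$-compatible with $\sigma_{\nu_{[p]}}(L_{p,r})=0$ from equation (\ref{eq:zerosum}), together with the observation that every $\pi_{(a,b)}$ lies in the subgroup $\mathrm{Aut}_{\{\nu_{[p]}\}}(L_{p,r})$. Writing $P:=\{\nu_{[p]}\}$, I would first record that $\mathrm{Support}(P)=[p]=S$, and that the formula (\ref{eq:perm_natural_ab}) shows $\pi_{(a,b)}$ restricts to a bijection of $[p]$ onto itself whenever $a\neq 0$. Hence $\mathrm{Support}(P^{\pi_{(a,b)}})=\pi_{(a,b)}([p])=[p]\subseteq S$, which is exactly the membership condition defining $\mathrm{Aut}_P(L_{p,r})$. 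Since $\pi_{(a,b)}\in\mathrm{Aut}(L_{p,r})$ by the paragraph preceding the statement, we conclude $\pi_{(a,b)}\in\mathrm{Aut}_P(L_{p,r})$ for all admissible $a,b$.

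For the compatibility claim I would invoke Proposition \ref{proposition_isopolynomial}: since $L_{p,r}$ is $P$-compatible, it is also $\bigcup_{\pi\in\mathrm{Aut}_P(L_{p,r})}P^{\pi}$-compatible. By the previous step, the target set $\{\nu_{[p]}^{\pi_{(a,b)}}\colon a,b\in\mathbb{Z}_p,\ a\neq 0\}$ is contained in this union, since each $\nu_{[p]}^{\pi_{(a,b)}}\in P^{\pi_{(a,b)}}$. The first statement of Lemma \ref{lemma_F} then yields compatibility of $L_{p,r}$ over the target set itself.

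The zero-sum refinement follows from the first statement of Lemma \ref{lemma_isopolynomial}, applied with $f=\nu_{[p]}$ and $\pi=\pi_{(a,b)}$ (whose hypothesis $\mathrm{Support}(P^{\pi_{(a,b)}})\subseteq S$ was checked above): it gives $\sigma_{\nu_{[p]}^{\pi_{(a,b)}}}(L_{p,r}^{\pi_{(a,b)}})=\pi_{(a,b)}(\sigma_{\nu_{[p]}}(L_{p,r}))$. Because $\pi_{(a,b)}$ is an automorphism of $L_{p,r}$, the left-hand argument satisfies $L_{p,r}^{\pi_{(a,b)}}=L_{p,r}$, and using $\sigma_{\nu_{[p]}}(L_{p,r})=0$ together with $\pi_{(a,b)}(0)=0$ (valid because $\pi_{(a,b)}\in\overline{\mathrm{Sym}}(\widetilde{[p]})$) I obtain $\sigma_{\nu_{[p]}^{\pi_{(a,b)}}}(L_{p,r})=0$ for every pair $(a,b)$ with $a\neq 0$.

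The main obstacle is not in this assembly, which is essentially bookkeeping over the earlier lemmas, but in the supporting fact borrowed from the preceding paragraph, namely that each $\pi_{(a,b)}$ genuinely is an automorphism of $L_{p,r}$. If that had to be proved here rather than cited, the work would be to check that the diagonally cyclic automorphism $i\mapsto a\cdot i+b$ of $H_{p,r}$ lifts through the index shifts and the sign-twisting used in the construction of $L_{p,r}$ to a permutation of $\widetilde{[p]}$ that fixes $0$, negates consistently on the block $-[p]$, and preserves every defined entry of the Cayley table; this is the step where the non-associativity of $L_{p,r}$ makes the verification delicate.
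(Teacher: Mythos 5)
Your proposal is correct and follows essentially the same route as the paper's own proof: citing $\pi_{(a,b)}\in\mathrm{Aut}(L_{p,r})$ from the preceding paragraph, deducing compatibility via Proposition \ref{proposition_isopolynomial} (with Lemma \ref{lemma_F}(1) making explicit the restriction to the target subset), and obtaining the zero sums from Lemma \ref{lemma_isopolynomial} together with (\ref{eq:zerosum}). The only difference is that you spell out the bookkeeping (membership in $\mathrm{Aut}_P(L_{p,r})$, $L_{p,r}^{\pi_{(a,b)}}=L_{p,r}$, $\pi_{(a,b)}(0)=0$) that the paper leaves implicit.
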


\begin{proof} Since $\pi_{(a,b)}\in \mathrm{Aut}(L_{p,r})$ for all $a,b\in\mathbb{Z}_p$, with $a\neq 0$, the first statement follows readily from Proposition \ref{proposition_isopolynomial}. The second statement holds from Lemma \ref{lemma_isopolynomial} and (\ref{eq:zerosum}).
\end{proof}

\vspace{0.25cm}

Thus, for instance,
\begin{align*}
\sigma_{\nu^{\pi_{(0,3)}}_{[7]}}(L_{7,3}) & =(((((4+5)+6)+7)+1)+2)+3=\\
& =((((-7+6)+7)+1)+2)+3=\\
& =(((4+7)+1)+2)+3=\\
& =((-6+1)+2)+3=\\
& =((5+2)+3=\\
& =-3+3=\\
&=0
\end{align*}
and
\begin{align*}
\sigma_{\nu^{\pi_{(2,4)}}_{[7]}}(L_{7,3}) & =(((((6+1)+3)+5)+7)+2)+4=\\
& =(((((-5+3)+5)+7)+2)+4=\\
& =((((6+5)+7)+2)+4=\\
& =(((-3+7)+2)+4=\\
& =((1+2)+4=\\
& =-4+4=\\
&=0.
\end{align*}

\subsection{The general construction}

The previous construction can be generalized to define loops that are compatible over sum polynomials on $kp$ variables, where $k\neq 2$ is a positive integer. In this regard, let us consider an ordered $kp$-set of symbols 
\[S:=\bigcup_{\ell\in [k]}\bigcup_{i\in [p]}\{s_{\ell,i}\}.\] For each $\ell\in [k]$, we define the ordered subset $S_\ell:=\{s_{\ell,i}\colon\, i\in [p]\}$ and the element $s_{\ell,0}:=0$. Then, for each permutation $\pi\in\mathrm{Sym}([k])$, we define the following sum polynomial arisen from natural sum polynomials over $S$.
\[\nu_{S_{\pi(1)},\ldots,S_{\pi(k)}}:=\left(\ldots\left(\left(\nu_{S_{\pi(1)}}+\nu_{S_{\pi(2)}}\right)+\nu_{S_{\pi(3)}}\right)+\ldots\right)+\nu_{S_{\pi(k)}}.\]
Further, for each $2k$-tuple $\alpha:=(a_1,b_1,\ldots,a_k,b_k)\in\mathbb{Z}_p^{2k}$ such that $a_\ell\neq 0$ for all $\ell\in [k]$, we also define the permutation $\pi_\alpha\in\overline{\mathrm{Sym}}(\widetilde{S})$ so that, for each $\ell\in [k]$ and $i\in [p]$,
\begin{equation}\label{eq:perm_natural}
\pi_\alpha(s_{\ell,i}):=s_{\ell, (a_\ell(i-1)+b_\ell\pmod p)+1}.
\end{equation}
That is, the restriction of $\pi_\alpha$ to each $S_\ell$ is a equivalent to (\ref{eq:perm_natural_ab}). Finally, let $I$ be an idempotent Latin square of order $k$ over $[k]$. That is, $I[\ell,\ell]=\ell$ for all $\ell\in\mathbb{Z}_k$. (This always exists because $k\neq 2$.) Then, we define the loop $L_{p,r}^{S,I}\in\mathcal{L}(\widetilde{S})$ such that, for each $\ell_1,\ell_2\in [k]$ and each $i_1,i_2\in [p]$,
\[L_{p,r}^{S,I}\left[\pm s_{\ell_1,\,i_1},\, s_{\ell_2,\,i_2}\right]:=\pm s_{I[\ell_1,\ell_2],\,L_{p,r}[i_1,i_2]}\]
and
\[L_{p,r}^{S,I}\left[\pm s_{\ell_1,\,i_1},\, -s_{\ell_2,\,i_2}\right]:=\mp s_{I[\ell_1,\ell_2],\,L_{p,r}[i_1,i_2]}\]

\vspace{0.2cm}

\begin{example}\label{example_I0} If we consider the idempotent Latin square
\[I\equiv \begin{array}{|c|c|c|} \hline
1 & 3 & 2\\ \hline
3 & 2 & 1\\ \hline
2 & 1 & 3\\ \hline
\end{array}\]
then the non-associative loop $L_{3,3}^{[9],I}$ is defined as follows.
\[{\tiny\begin{array}{|c||c|c|c|c|c|c|c|c|c||c|c|c|c|c|c|c|c|c|} \hline
0&1&2&3&4&5&6&7&8&9&-9&-8&-7&-6&-5&-4&-3&-2&-1 \\ \hline\hline
    1&-1&-3&-2&-7&-9&-8&-4& -6&-5&5&6&4&8&9&7&2&3&0\\ \hline
    2& -3&-2&-1&-9&-8&-7&-6&-5&-4&4&5&6&7&8&9&1&0&3\\ \hline
    3&  -2&-1&-3&-8&-7&-9&-5&-4&-6&6&4&5&9&7&8&0&1&2\\ \hline
    4&-7&-9&-8&-4&-6&-5&-1&-3&-2&2&3&1&5&6&0&8&9&7\\ \hline
    5&-9&-8&-7&-6&-5&-4&-3&-2&-1&1&2&3&4&0&6&7&8&9\\ \hline
    6&-8&-7&-9&-5&-4&-6&-2&-1&-3&3&1&2&0&4&5&9&7&8\\ \hline
    7&-4&-6&-5&-1&-3&-2&-7&-9&-8&8&9&0&2&3&1&5&6&4\\ \hline
    8 &-6&-5&-4&-3&-2&-1&-9&-8&-7&7&0&9&1&2&3&4&5&6\\ \hline
    9 &-5&-4&-6&-2&-1&-3&-8&-7&-9&0&7&8&3&1&2&6&4&5\\ \hline\hline
    -9&5&4&6&2&1&3&8&7&0&9&-7&-8&-3&-1&-2&-6&-4&-5\\ \hline
    -8 &6&5&4&3&2&1&9&0&7&-7&8&-9&-1&-2&-3&-4&-5&-6\\ \hline
    -7&4&6&5&1&3&2&0&9&8&-8&-9&7&-2&-3&-1&-5&-6&-4\\ \hline
    -6&8&7&9&5&4&0&2&1&3&-3&-1&-2&6&-4&-5&-9&-7&-8\\ \hline
    -5&9&8&7&6&0&4&3&2&1&-1&-2&-3&-4&5&-6&-7&-8&-9\\ \hline
    -4&7&9&8&0&6&5&1&3&2&-2&-3&-1&-5&-6&4&-8&-9&-7\\ \hline
    -3&2&1&0&8&7&9&5&4&6&-6&-4&-5&-9&-7&-8&3&-1&-2\\ \hline
    -2&3&0&1&9&8&7&6&5&4&-4&-5&-6&-7&-8&-9&-1&2&-3\\ \hline
    -1&0&3&2&7&9&8&4&6&5&-5&-6&-4&-8&-9&-7&-2&-3&1\\ \hline
 \end{array}}
\]
\hfill $\lhd$
\end{example}

\vspace{0.2cm}

\begin{proposition}\label{proposition_isopolynomial_kp} The loop $L_{p,r}^{S,I}$ is $\{\nu^{\pi_{\alpha}}_{S_{\pi(1)},\ldots,S_{\pi(k)}}\colon\, \pi\in\mathrm{Sym}([k]) \text{ and } \alpha=(a_1,b_1,$ $\ldots,a_k,b_k)\in\mathbb{Z}_p^{2k} \text{ is such that } a_\ell\neq 0 \text{ for all } \ell\in [k]\}$-compatible. More precisely, every sum polynomial in the set describes a zero sum in $S$.
\end{proposition}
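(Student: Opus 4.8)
The plan is to exploit the \emph{block structure} of $L_{p,r}^{S,I}$ imposed by the idempotent Latin square $I$, reducing the whole computation to the basic loop $L_{p,r}$ and Proposition \ref{proposition_isopolynomial_ab}. For each $\ell\in[k]$, consider the bijection $\theta_\ell$ from $\widetilde{[p]}$ onto $\{\pm s_{\ell,i}\colon i\in[p]\}\cup\{0\}$ given by $\theta_\ell(\pm i):=\pm s_{\ell,i}$ and $\theta_\ell(0):=0=s_{\ell,0}$. The first step is to show that $\theta_\ell$ is an \emph{isomorphism} from $L_{p,r}$ onto the sub-loop of $L_{p,r}^{S,I}$ supported on $\{\pm s_{\ell,i}\colon i\in[p]\}\cup\{0\}$. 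The crucial ingredient is the idempotency $I[\ell,\ell]=\ell$: when both operands carry the same first index $\ell$, the defining rules of $L_{p,r}^{S,I}$ keep the first index equal to $I[\ell,\ell]=\ell$, so the result stays inside the block, while its second index and sign are governed precisely by $L_{p,r}$ evaluated on the signed second indices. One thus obtains $L_{p,r}^{S,I}[\theta_\ell(x),\theta_\ell(y)]=\theta_\ell(L_{p,r}[x,y])$ for all $x,y\in\widetilde{[p]}$.

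Next I would observe that, by the very definitions \eqref{eq:perm_natural} and \eqref{eq:perm_natural_ab}, the restriction of $\pi_\alpha$ to $S_\ell$ corresponds under $\theta_\ell$ to the permutation $\pi_{(a_\ell,b_\ell)}$ on $[p]$; hence $\theta_\ell$ carries the block polynomial $\nu_{S_\ell}^{\pi_\alpha}$ to $\nu_{[p]}^{\pi_{(a_\ell,b_\ell)}}$. Transporting the evaluation map $\sigma$ across the isomorphism $\theta_\ell$ (the argument is the one used in Lemma \ref{lemma_isopolynomial}, now for an isomorphism between two partial loops on different carrier sets) yields $\sigma_{\nu_{S_\ell}^{\pi_\alpha}}(L_{p,r}^{S,I})=\theta_\ell\big(\sigma_{\nu_{[p]}^{\pi_{(a_\ell,b_\ell)}}}(L_{p,r})\big)$. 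Since $a_\ell\neq 0$, Proposition \ref{proposition_isopolynomial_ab} gives that the inner value is $0$, and as $\theta_\ell(0)=0$ we conclude that \emph{each block sums to the unity} $0$.

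Finally, the polynomial $\nu^{\pi_\alpha}_{S_{\pi(1)},\ldots,S_{\pi(k)}}$ is, by construction, the left-associated sum of the block polynomials $\nu^{\pi_\alpha}_{S_{\pi(j)}}$ (note that $\pi_\alpha$ fixes each $S_\ell$ setwise, so it merely reorders variables within blocks). Therefore $\sigma_{\nu^{\pi_\alpha}_{S_{\pi(1)},\ldots,S_{\pi(k)}}}(L_{p,r}^{S,I})$ is computed by summing the block values in $L_{p,r}^{S,I}$; since every block evaluates to $0$ and $0+_{L_{p,r}^{S,I}}0=0$, a trivial induction on the number of blocks gives total value $0\in\widetilde{S}$, which is exactly the claimed zero sum. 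Compatibility is then immediate: every polynomial in the family has support equal to $S$ (as $\pi$ and $\pi_\alpha$ only permute the symbols of $S$), so condition (1) holds because all values lie in $\widetilde{S}$, and condition (2) holds because all values coincide with $0$.

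The hard part will be the first step, namely verifying rigorously that $\theta_\ell$ is an isomorphism. This forces one to fix the intended reading of the sign convention in the definition of $L_{p,r}^{S,I}$: the second-index rule is to be understood as evaluating $L_{p,r}$ on the \emph{signed} second indices, so that the degenerate diagonal and inverse cases (for instance $L_{p,r}[-i,i]=0$) are inherited correctly from the basic loop rather than from a naive factoring of the sign, which would conflict with the loop axioms. Once this block isomorphism is secured and combined with the idempotency of $I$, the rest is a routine reduction to Proposition \ref{proposition_isopolynomial_ab} together with the observation that a sum of copies of the unity is again the unity.
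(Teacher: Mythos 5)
Your proposal is correct and follows essentially the same route as the paper's proof: the idempotency of $I$ makes each block $\{\pm s_{\ell,i}\colon i\in[p]\}\cup\{0\}$ a relabelled copy of $L_{p,r}$ (your isomorphism $\theta_\ell$ is precisely what the paper invokes implicitly when it writes $\sigma_{\nu^{\pi_\alpha}_{S_\ell}}\left(L_{p,r}^{S,I}\right)=s_{\ell,\,\sigma_{\nu^{\pi_{(a_\ell,b_\ell)}}_{[p]}}(L_{p,r})}=s_{\ell,0}=0$ via Proposition \ref{proposition_isopolynomial_ab}), and the left-associated sum of the resulting block zeros vanishes because $L_{p,r}^{S,I}[0,0]=0$. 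Your additional care about the sign convention in the definition of $L_{p,r}^{S,I}$ and the explicit transport of evaluations along $\theta_\ell$ merely make rigorous what the paper's three-line proof leaves implicit.
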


\begin{proof} Let $\alpha:=(a_1,b_1,\ldots,a_k,b_k)\in\mathbb{Z}_p^{2k}$ such that $a_\ell\neq 0$ for all $\ell\in [k]$. In addition, for each $\ell\in [k]$, let $\pi_{(a_\ell,b_\ell)}\in\overline{\mathrm{Sym}}(\widetilde{[p]})$ be defined as in (\ref{eq:perm_natural_ab}). Since $I$ is idempotent, Proposition \ref{proposition_isopolynomial_ab} implies that
\[\sigma_{\nu^{\pi_\alpha}_{S_\ell}}\left(L_{p,r}^{S,I}\right)=s_{\ell,\,\sigma_{\nu^{\pi_{(a_\ell,b_\ell)}}_{[p]}}(L_{p,r})}=\sigma_{\ell,0}=0.\]
Then, the result holds readily because $L_{p,r}^{S,I}[0,0]=0$.
\end{proof}

\begin{example}\label{example_I0a} Under the assumptions of Example \ref{example_I0}, we have that 
 \begin{align*}
\sigma_{\nu^{(1,0,2,1,2,2)}_{\{1,2,3\},\{7,8,9\},\{4,5,6\}}}\left(L_{3,2}^{[9],I}\right)& =(((1+2)+3) + ((8+7)+9))+ ((6+5)+4))=\\
& =((-3+3) + (-9+9))+ (-4+4))=\\
& = (0+0)+0=\\
& =0.
\end{align*}  
\hfill $\lhd$
\end{example}

\section{Heffter arrays over partial loops}\label{sec:Heffter_loop}

In the previous sections, we have shown some examples of sum polynomials giving rise to zero sums over non-associative (partial) loops. Based on the existence of this kind of examples, we introduce in this section a natural generalization of the concept of Heffter array so that summations are defined over (partial) loops instead that groups. Moreover, we are interested in other block sums apart from row and column ones, so we also generalize the concept of Heffter system from abelian groups to (partial) loops. In this regard, we consider the following objects: a finite set of symbols $S$, with $0\not\in S$; a pair of positive integers $m$ and $\lambda$; a partially filled array $A\in\mathcal{A}(m,S)$; and an affine $1$-design $\mathcal{D}\in\mathrm{Aff}_{\lambda}(A)$. Then, we call {\em $\mathcal{D}$-sum-polynomial set} to any set $\mathcal{P}$ of sum polynomials over $S$ satisfying the following two conditions.
\begin{enumerate}
    \item If $f\in \mathcal{P}$, then there is a block $B\in\mathcal{D}$ such that $\mathrm{Support}(\{f\})=\mathrm{Symb}(B)$.
    \item If $B\in\mathcal{D}$, then there is at least one $\mathrm{Symb}(B)$-sum polynomial in $\mathcal{P}$.
\end{enumerate}
Let $\mathcal{P}$ be a $\mathcal{D}$-sum-polynomial set. We say that the array $A$ is {\em $\mathcal{P}$-Heffter} over a partial loop $L\in\mathcal{L}(\widetilde{S})$ if $\sigma_f(L)=0$ for all $f\in\mathcal{P}$. If $\mathcal{P}$ contains all the $\mathrm{Symb}(B)$-sum polynomials for all $B\in\mathcal{D}$, then we also say that $A$ is {\em $\mathcal{D}$-Heffter} over $L$. Furthermore, if for every pair of distinct symbols in $S$ there exists precisely one block $B \in \mathcal{D}$ containing them, we say that $A$ is a \emph{Heffter linear space} over $L$, while if $\mathcal{D}=\{\mathrm{Part}_{\mathrm{row}}(A),$ $\mathrm{Part}_{\mathrm{col}}(A)\}$, then we say that $A$ is a {\em Heffter array} over $L$. These notions coincide with those of Heffter linear space and Heffter array whenever $L$ is an abelian group. 

\begin{example}\label{example_Heffter} The array
\[A\equiv \begin{array}{|c|c|c|} \hline
1 & 2 & 3\\ \hline
4 & 5 & 6\\ \hline
7 & 8 & 9\\ \hline
\end{array}\]
is Heffter over the non-associative loop $L_{3,3}^{[9],I}$ defined in Example \ref{example_I0}. So, it is an example of a Heffter linear space over a non-associative loop. \hfill $\lhd$
\end{example}

\vspace{0.2cm}

The following problem arises.

\begin{problem}\label{problem_existence} Let $A\in\mathcal{A}(m,S)$ and $\mathcal{D}\in\mathrm{Aff}_\lambda(A)$. Does there exist a partial loop $L\in\mathcal{L}(\widetilde{S})$ and a $\mathcal{D}$-sum-polynomial set $\mathcal{P}$ such that the array $A$ is $\mathcal{P}$-Heffter over $L$? Does there exist a (partial) loop over which the array $A$ is $\mathcal{D}$-Heffter?
\end{problem}

\vspace{0.25cm}

The next theorem shows how the general construction described in Section \ref{sec:Example} gives a partial affirmative answer to the first question in Problem \ref{problem_existence}.

\begin{theorem} \label{thm:loop_for_affine_design} Let $A\in\mathcal{A}(m,S)$ and $\mathcal{D}\in\mathrm{Aff}_\lambda(A)$. If, for each block $B\in\mathcal{D}$, here exists an odd prime $p_B$ and a positive integer $k_B\neq 2$ such that $|\mathrm{Symb}(B)|=k_Bp_B$, then there is a partial loop $L_{\mathcal{D}}\in\mathcal{L}(\widetilde{S})$ such that the array $A$ is $\{\nu^{\pi_{\alpha}}_{\mathrm{Symb}(B)}\colon\, B\in\mathcal{D} \text{ and } \alpha=(a_1,b_1,$ $\ldots,a_k,b_k)\in\mathbb{Z}_p^{2k} \text{ is such that } a_\ell\neq 0 \text{ for all } \ell\in [k]\}$-Heffter over $L_{\mathcal{D}}$. Moreover, if all the blocks in $\mathcal{D}$ have the same size, then there is a loop satisfying this condition.
\end{theorem}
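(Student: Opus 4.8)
The plan is to build $L_{\mathcal{D}}$ by \emph{amalgamating}, block by block, copies of the loops produced in Section~\ref{sec:Example}, and to use the affine incidence condition to guarantee that these local pieces glue together into a single well-defined partial operation on $\widetilde{S}$. First I would treat each block in isolation: fixing $B\in\mathcal{D}$, by hypothesis $|\mathrm{Symb}(B)|=k_Bp_B$ with $p_B$ an odd prime and $k_B\neq 2$, so choosing a primitive root $r_B$ of $p_B$, an idempotent Latin square $I_B$ of order $k_B$, and any partition of $\mathrm{Symb}(B)$ into $k_B$ ordered $p_B$-subsets, the general construction yields a loop $L_B:=L_{p_B,r_B}^{\mathrm{Symb}(B),I_B}\in\mathcal{L}(\widetilde{\mathrm{Symb}(B)})$ over which, by Proposition~\ref{proposition_isopolynomial_kp}, every sum polynomial $\nu^{\pi_\alpha}_{\mathrm{Symb}(B)}$ in the target set evaluates to $0$.

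Next I would define $L_{\mathcal{D}}$ on $\widetilde{S}$ by declaring $x+_{L_{\mathcal{D}}}y:=x+_{L_B}y$ whenever $x,y\in\widetilde{\mathrm{Symb}(B)}$ and this product is one of the cells actually visited while evaluating some block-sum of $B$, together with the forced cells $0+_{L_{\mathcal{D}}}s=s+_{L_{\mathcal{D}}}0=s$ and $s+_{L_{\mathcal{D}}}(-s)=0$; every remaining cell is left empty. The crux is \textbf{consistency}: if a cell $(x,y)$ is claimed by two blocks $B,B'$, the two prescriptions must agree, and this is exactly where the affine hypothesis enters. Two parallel blocks have disjoint symbol sets, so $\widetilde{\mathrm{Symb}(B)}\cap\widetilde{\mathrm{Symb}(B')}=\{0\}$ and no clash arises; two non-parallel blocks meet in exactly one entry, whence $\mathrm{Symb}(B)\cap\mathrm{Symb}(B')=\{s\}$ and any overlap is confined to $\{0,s,-s\}\times\{0,s,-s\}$. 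In a block-sum the left operand is always a partial sum and the right operand is always a (positive) symbol, so a shared cell can only be $(0,s)$, $(-s,s)$ or $(s,s)$; the first two are fixed by the identity and inverse axioms, and the third is never visited because a partial sum never coincides with the summand added next to it. This is the key computation: in the basic construction the identity $\sigma_{\nu_{[\ell-1]}}(L_{p,r})=\ell$ would force $r^{\ell-1}\equiv 1\pmod{p}$, i.e.\ $\ell\in\{1,p\}$, and in both cases the sign $(-1)^{\ell}$ rules the coincidence out; the property is inherited by every $\pi_\alpha$-image through the automorphisms $\pi_{(a,b)}$ via Lemma~\ref{lemma_isopolynomial}, and by the general construction since each sub-block $S_\ell$ sums to $0$ in isolation while distinct sub-blocks are combined only through $0+0=0$. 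Hence no diagonal cell $(s,s)$ is ever required, the overlaps collapse to axiom-forced cells, and $+_{L_{\mathcal{D}}}$ is well defined.

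With consistency secured, the remaining checks are routine. Each $L_B$ is a Latin square and the only symbols two of them share are $\{0,\pm s\}$, on which they agree, so $L_{\mathcal{D}}$ inherits the partial-Latin-square property, i.e.\ each equation $s+x=t$ and $x+s=t$ has at most one solution; the identity and inverse axioms hold by construction, giving $L_{\mathcal{D}}\in\mathcal{L}(\widetilde{S})$. Since every polynomial $\nu^{\pi_\alpha}_{\mathrm{Symb}(B)}$ visits only cells on which $L_{\mathcal{D}}$ coincides with $L_B$, Proposition~\ref{proposition_isopolynomial_kp} yields $\sigma_{\nu^{\pi_\alpha}_{\mathrm{Symb}(B)}}(L_{\mathcal{D}})=0$ for every $B\in\mathcal{D}$, which is precisely the claimed Heffter property.

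Finally, when all blocks share the same size $kp$ I would run the argument with a single primitive root $r$ and a single idempotent square $I$: the local multiplication rules then coincide everywhere, the diagonal values $s+_{L}s=-s$ become globally consistent as well, no cell need be left empty on the supports involved, and the resulting partial Latin square can be completed to a genuine loop on $\widetilde{S}$. I expect the consistency step---controlling the $3\times 3$ overlap at each shared symbol and certifying that the diagonal cell is never summoned---to be the main obstacle; everything else is bookkeeping layered on top of Proposition~\ref{proposition_isopolynomial_kp}.
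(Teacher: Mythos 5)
Your proposal is correct and follows essentially the same route as the paper: for each block $B$ build $L_{p_B,r_B}^{\mathrm{Symb}(B),I_B}$ via the general construction of Section~\ref{sec:Example} (so Proposition~\ref{proposition_isopolynomial_kp} supplies the zero sums), glue the block tables into a single partial loop over $\widetilde{S}$, and invoke the affine condition --- parallel blocks share no symbol, non-parallel blocks share exactly one symbol $s$ --- to ensure the glued operation is well defined. Two remarks. First, your detour proving that the diagonal cell $(s,s)$ is never visited (via the sign and the condition $r^{\ell-1}\equiv 1\pmod{p_B}$) is correct but unnecessary: the paper simply takes $\mathrm{Ent}(L_{\mathcal{D}})$ to be the union of \emph{all} entries of the block loops, noting that on the entire overlap $\{0,s,-s\}\times\{0,s,-s\}$ the two block loops assign identical canonical values ($0+0=0$, $s+(-s)=0$, $s+s=-s$, $(-s)+(-s)=s$, etc.), so no clash can occur regardless of which cells an evaluation actually uses; this also spares you the slightly delicate bookkeeping about right operands being positive symbols, which as you yourself note fails at the top level of $\nu_{S_{\pi(1)},\ldots,S_{\pi(k)}}$ where sub-block sums (equal to $0$) are combined. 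Second, your closing assertion that when all blocks have equal size the resulting partial Latin square ``can be completed to a genuine loop'' is stated without justification and is the only real gap in your write-up; in fairness, the paper's own proof is equally silent on this ``Moreover'' claim, so on the substantive, proved portion of the theorem the two arguments coincide.
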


\begin{proof} For each block $B\in \mathcal{D}$, let $r_B$ be a primitive root of the corresponding odd prime $p_B$, and let $I_B$ be an idempotent Latin square of order $k_B$ over the set $[k_B]$. By following the general construction described in Section \ref{sec:Example}, we define the non-associative loop $L_{p_B,r_B}^{\mathrm{Symb}(B),I_B}\in\mathcal{L}(\widetilde{\mathrm{Symb}(B)})$. 

The required partial loop $L_{\mathcal{D}}\in \mathcal{L}(\widetilde{S})$ is defined so that
\begin{equation}\label{eq_LD}
\mathrm{Ent}(L_\mathcal{D}):=\bigcup_{B\in\mathcal{D}} \mathrm{Ent}(L_B).
\end{equation}
It is well-defined because $\mathcal{D}\in\mathrm{Aff}_\lambda(A)$ and hence, every pair of non-parallel blocks in $\mathcal{D}$ intersect in exactly one point. More precisely, if there exist two elements $a,b\in (\widetilde{\mathrm{Symb}(B_1)}\cap\widetilde{\mathrm{Symb}(B_2))}\setminus\{0\}$ for a pair of distinct blocks $B_1,B_2\in\mathcal{D}$, then it must be $b=\pm a$. Then, $L_{\mathcal{D}}[a,b]\in\{a,-a\}\in (\widetilde{\mathrm{Symb}(B_1)}\cap\widetilde{\mathrm{Symb}(B_2))}\setminus\{0\}$.
\end{proof}

\vspace{0.5cm}

Concerning the second question in Problem \ref{problem_existence}, the following preliminary result holds.

\begin{lemma}\label{lemma_existence} Let $A\in\mathcal{A}(m,S)$ and $\mathcal{D}\in\mathrm{Aff}_\lambda(A)$. In addition, for each block $B\in\mathcal{D}$, let $L_B\in\mathcal{L}(\widetilde{S})$ be the smallest partial loop contained in $L$ from which $\sigma_f(L)$ can be computed for any $\mathrm{Symb}(B)$-sum polynomial $f$, and let $L_\mathcal{D}\in\mathcal{L}(\widetilde{S})$ be described as in (\ref{eq_LD}). Then, all the partial loops $L_B$, and also $L_{\mathcal{D}}$, 
are partial abelian groups.
\end{lemma}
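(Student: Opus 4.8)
The operative hypothesis is that $A$ is $\mathcal{D}$-Heffter over $L$, so that $\sigma_f(L)=0$ for every $\mathrm{Symb}(B)$-sum polynomial $f$ and every block $B\in\mathcal{D}$. The plan is to extract from this a single ``subset sum'' on $T:=\mathrm{Symb}(B)$ that is blind to order and bracketing. First I would note that, because $0\in\widetilde S$ while $\cdot+_L s=s+_L\cdot=\cdot$, no partial sum arising inside such an $f$ can be undefined; hence all entries consumed in evaluating the various $f$ are genuine entries of $L$, and these are exactly the ones gathered into $L_B$. The core claim is that for each $T'\subseteq T$ every sum polynomial over $T'$ returns one and the same value $\Sigma(T')\in\widetilde S$. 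To see it I would take two sum polynomials $g_1,g_2$ over $T'$, append the symbols of $T\setminus T'$ in a fixed order with left association, and thereby complete each $g_i$ to a genuine $T$-sum polynomial $f_i$ with $\sigma_{f_i}(L)=0$. As every elementary step $w\mapsto w+_L t$ has at most one preimage, the map carrying the value of $g_i$ to that of $f_i$ is an injective partial function, so $\sigma_{g_1}(L)=\sigma_{g_2}(L)$; this is the first partial-loop axiom doing the real work.

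Granting $\Sigma$, the abelian-group laws for $L_B$ should follow. Every nontrivial entry of $L_B$ is a node of some sum-polynomial tree, hence of the form $(\Sigma(U),\Sigma(V),\Sigma(U\cup V))$ for disjoint nonempty $U,V\subseteq T$, and conversely each such disjoint pair occurs; since $U\cup V$ is symmetric in $U,V$, this gives $\Sigma(U)+_L\Sigma(V)=\Sigma(V)+_L\Sigma(U)$ and commutativity at once. For associativity I would verify the partial-group axiom: if $(x+_Ly)+_Lz$ and $x+_L(y+_Lz)$ are both defined in $L_B$, I would represent $x,y,z$ as subset sums and show both bracketings equal $\Sigma(U\cup V\cup W)$ for pairwise disjoint $U,V,W$. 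The delicate point, which I expect to be a main obstacle, is that one element of $L_B$ may be $\Sigma$ of several subsets; I would therefore not fix the representatives in advance but read them off the two trees witnessing that the intermediate products $x+_Ly$ and $y+_Lz$ are entries of $L_B$, and then use the uniqueness of solutions once more to force the two computations of the common value to agree. The identity $\Sigma(U)+_L\Sigma(T\setminus U)=\Sigma(T)=0$, showing that complementation realizes inverses, is what keeps this bookkeeping consistent.

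It remains to pass to $L_\mathcal{D}$ defined by (\ref{eq_LD}). I would first recall from the proof of Theorem~\ref{thm:loop_for_affine_design} that the union is a well-defined partial loop precisely because $\mathcal{D}\in\mathrm{Aff}_\lambda(A)$ forces any two non-parallel blocks to meet in a single point, so that $\widetilde{\mathrm{Symb}(B_1)}\cap\widetilde{\mathrm{Symb}(B_2)}=\{0,s,-s\}$ for the shared symbol $s$. Commutativity transfers immediately, since each product of $L_\mathcal{D}$ is already a product inside one $L_B$. For associativity I would argue that an instance of the axiom whose operands are not confined to a common block must route an intermediate value through the three-element intersection $\{0,s,-s\}$, and that every such configuration collapses to the identity and inverse relations, hence to a single-block computation already handled. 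Verifying that no genuinely cross-block associativity instance survives is where the affine single-point intersection property is indispensable, and together with the non-injectivity issue above it is the crux of the argument.
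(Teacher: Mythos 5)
Your proposal is correct in substance, and its engine is exactly the one the paper uses: take two ways of summing the same symbols, append the remaining symbols of $\mathrm{Symb}(B)$ as a common left-associated tail, note that both completed $\mathrm{Symb}(B)$-sum polynomials evaluate to $0$ by the $\mathcal{D}$-Heffter hypothesis, and cancel the tail step by step via the unique-solvability (partial Latin square) axiom. The difference is one of scope. The paper runs this cancellation argument exactly twice --- for $((s_1+s_2)+s_3)$ versus $(s_1+(s_2+s_3))$ and for $s_1+s_2$ versus $s_2+s_1$, with $s_1,s_2,s_3\in\mathrm{Symb}(B)$ --- and from these generator identities immediately declares $L_B$ associative and commutative, disposing of $L_{\mathcal{D}}$ with the remark that it ``follows readily from its definition.'' You instead prove the stronger intermediate statement that every sum polynomial over every subset $T'\subseteq\mathrm{Symb}(B)$ takes one common value $\Sigma(T')$ (the paper's two computations are special cases of this), you classify the non-forced entries of $L_B$ as the triples $\left(\Sigma(U),\Sigma(V),\Sigma(U\cup V)\right)$ with $U,V$ disjoint, and you thereby obtain commutativity on \emph{all} entries of $L_B$, not merely on pairs of block symbols. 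What this buys is fidelity to what the lemma literally asserts, since a partial group must satisfy its laws wherever both sides are defined, not only on generators; what it costs is that the two points you single out as the crux --- associativity for arbitrary elements of $L_B$, where one element may be $\Sigma$ of several subsets and the disjoint representatives witnessing the two bracketings need not be compatible, and the cross-block associativity instances in $L_{\mathcal{D}}$ routed through an intersection $\{0,s,-s\}$ --- are left in your write-up as strategies rather than completed arguments. Be aware that the paper does not resolve these points either: its proof never considers operands outside $\mathrm{Symb}(B)$, so on every step it actually carries out, your argument matches or subsumes it, and yours is the more explicit about where the genuinely delicate residual work lies.
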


\begin{proof} Let $B\in \mathcal{D}$, and let $T:=\{s_1,s_2,s_3\}\subseteq \mathrm{Symb}(B)$. Since $A$ is $\mathcal{D}$-Heffter over $L_\mathcal{D}$, we have that
\[\left((s_1+s_2)+s_3\right)+\sum_{s\in\mathrm{Symb}(B)\setminus T}  s=0=\left(s_1+(s_2+s_3)\right)+\sum_{s\in\mathrm{Symb}(B)\setminus T}  s.\]
Since $L_B$ is a partial Latin square, then $(s_1+s_2)+s_3=s_1+(s_2+s_3)$. That is, the partial loop $L_B$ is associative, and hence, a partial group. In a similar way, we have that $s_1+s_2=s_2+s_1$, for all $s_1,s_2\in \mathrm{Symb}(B)$. That is, $L_B$ is also abelian. The fact that $L_\mathcal{D}$ is also a partial abelian group follows readily from its definition.
\end{proof}

\vspace{0.25cm}

Under the assumptions, of Lemma \ref{lemma_existence}, the cardinality of the set $\mathrm{Symb}(B)$, with $B\in\mathcal{D}$, plays a relevant role in the study of the partial abelian group $L_B$. Thus, for example, if $\mathrm{Symb}(B)=\{s_1,s_2,s_3\}$, then $s_i+s_j=-s_k$ whenever $\{i,j,k\}=\{1,2,3\}$. This implies that

\begin{equation}\label{eq:LB3}
L_B\equiv\begin{array}{|c||c|c|c|c|c|c|}\hline
0 & s_1 & s_2 & s_3 & -s_3 & -s_2 & -s_1\\ \hline \hline
s_1 & \cdot & -s_3 & -s_2 & \cdot &\cdot & 0 \\ \hline
s_2 & -s_3 & \cdot & -s_1 & \cdot & 0 & \cdot \\ \hline
s_3 & -s_2 & -s_1 & \cdot & 0 &  \cdot & \cdot \\ \hline
-s_3 & \cdot & \cdot & 0 & \cdot & \cdot & \cdot \\ \hline
-s_2 & \cdot & 0 &  \cdot & \cdot & \cdot & \cdot \\ \hline
-s_1 & 0 & \cdot & \cdot & \cdot & \cdot & \cdot\\ \hline
\end{array}
\end{equation}
The next example illustrates that $L_B$ is useful to determine a partial abelian group over which the partially filled array $A$ in Example \ref{example_Affine} is $\mathcal{D}$-Heffter.

\begin{example}\label{example_PHeffter} Under the assumptions and notations of Example \ref{example_Affine}, we have from (\ref{eq:LB3}) that the partial group in $\mathcal{L}([9])$ of smallest size over which the array $A$ is $\mathcal{D}_A$-Heffter is the following. (For a much better understanding, those cells related to the same block in Example \ref{example_Affine} have been colored according to the coloring therein described.)

\begin{table}[H]
\resizebox{\textwidth}{!}{
$\begin{array}{|c||c|c|c|c|c|c|c|c|c|c|c|c|c|c|c|c|c|c|c|}\hline
0 & 1 & 2 & 3 & 4 & 5 & 6 & 7 & 8 & 9 & -9 & -8 & -7 & -6 & -5 & -4 & -3 & -2 & -1 \\ \hline \hline
1 & \cdot & \cellcolor{red!100}{\color{white} -3} &\cellcolor{red!100}{\color{white} -2} & \cellcolor{orange!100}{\color{white}-7} &\cellcolor{black!100}{\color{white}  -9} & \cellcolor{blue!100}{\color{white} -8} & \cellcolor{orange!100}{\color{white}-4} & \cellcolor{blue!100}{\color{white} -6} & \cellcolor{black!100}{\color{white} -5} & \cdot & \cdot & \cdot & \cdot & \cdot & \cdot & \cdot & \cdot & 0\\ \hline
2 & \cellcolor{red!100}{\color{white} -3} & \cdot & \cellcolor{red!100}{\color{white} -1} & \cellcolor{blue!50}{\color{white} -9} & \cellcolor{orange!50}{\color{white}-8} & \cellcolor{black!50}{\color{white} -7} & \cellcolor{black!50}{\color{white} -6} & \cellcolor{orange!50}{\color{white}-5} & \cellcolor{blue!50}{\color{white} -4} & \cdot & \cdot & \cdot & \cdot & \cdot & \cdot & \cdot & 0 & \cdot \\ \hline
3 & \cellcolor{red!100}{\color{white} -2} & \cellcolor{red!100}{\color{white} -1} & \cdot & \cellcolor{black!25}{\color{white} -8} &  \cellcolor{blue!25}{\color{white} -7} & \cellcolor{orange!25}{\color{white}-9} & \cellcolor{blue!25}{\color{white} -5 }& \cellcolor{black!25}{\color{white}-4} & \cellcolor{orange!25}{\color{white}-6} & \cdot & \cdot & \cdot & \cdot & \cdot & \cdot & 0 & \cdot & \cdot \\ \hline
4 & \cellcolor{orange!100}{\color{white}-7} & \cellcolor{blue!50}{\color{white} -9} & \cellcolor{black!25}{\color{white}-8} & \cdot & \cellcolor{red!50}{\color{white} -6} & \cellcolor{red!50}{\color{white} -5} & \cellcolor{orange!100}{\color{white}-1} & \cellcolor{black!25}{\color{white}-3} & \cellcolor{blue!50}{\color{white} -2} & \cdot & \cdot & \cdot & \cdot & \cdot & 0 & \cdot & \cdot & \cdot \\ \hline
5 & \cellcolor{black!100}{\color{white} -9} & \cellcolor{orange!50}{\color{white}-8} &  \cellcolor{blue!25}{\color{white} -7} & \cellcolor{red!50}{\color{white} -6} & \cdot & \cellcolor{red!50}{\color{white} -4} &  \cellcolor{blue!25}{\color{white} -3} & \cellcolor{orange!50}{\color{white}-2} & \cellcolor{black!100}{\color{white} -1} & \cdot & \cdot & \cdot & \cdot & 0 & \cdot & \cdot & \cdot & \cdot \\ \hline
6 & \cellcolor{blue!100}{\color{white} -8} & \cellcolor{black!50}{\color{white} -7} & \cellcolor{orange!25}{\color{white}-9} & \cellcolor{red!50}{\color{white} -5} & \cellcolor{red!50}{\color{white} -4} & \cdot &\cellcolor{black!50}{\color{white}  -2} & \cellcolor{blue!100}{\color{white} -1} & \cellcolor{orange!25}{\color{white}-3} & \cdot & \cdot & \cdot & 0 & \cdot & \cdot & \cdot & \cdot & \cdot \\ \hline
7 & \cellcolor{orange!100}{\color{white}-4} & \cellcolor{black!50}{\color{white} -6} &  \cellcolor{blue!25}{\color{white} -5} & \cellcolor{orange!100}{\color{white}-1} & \cellcolor{blue!25}{\color{white} -3} & \cellcolor{black!50}{\color{white} -2} & \cdot & \cellcolor{red!25}{\color{white} -9} & \cellcolor{red!25}{\color{white} -8} & \cdot & \cdot & 0 & \cdot & \cdot & \cdot & \cdot & \cdot & \cdot \\ \hline
8 & \cellcolor{blue!100}{\color{white} -6} & \cellcolor{orange!50}{\color{white}-5} & \cellcolor{black!25}{\color{white}-4} & \cellcolor{black!25}{\color{white}-3} & \cellcolor{orange!50}{\color{white}-2} & \cellcolor{blue!100}{\color{white} -1} & \cellcolor{red!25}{\color{white}-9} & \cdot & \cellcolor{red!25}{\color{white}-7} & \cdot & 0 & \cdot & \cdot & \cdot & \cdot & \cdot & \cdot & \cdot \\ \hline
9 & \cellcolor{black!100}{\color{white} -5} & \cellcolor{blue!50}{\color{white} -4} & \cellcolor{orange!25}{\color{white}-6} & \cellcolor{blue!50}{\color{white} -2} &\cellcolor{black!100}{\color{white}  -1} & \cellcolor{orange!25}{\color{white}-3} & \cellcolor{red!25}{\color{white}-8} & \cellcolor{red!25}{\color{white}-7} & \cdot & 0 & \cdot & \cdot & \cdot & \cdot & \cdot & \cdot & \cdot & \cdot \\ \hline
-9 & \cdot & \cdot & \cdot & \cdot & \cdot & \cdot & \cdot & \cdot & 0 & \cdot & \cdot & \cdot & \cdot & \cdot & \cdot & \cdot & \cdot & \cdot \\ \hline
-8 & \cdot & \cdot & \cdot & \cdot & \cdot & \cdot & \cdot & 0 & \cdot & \cdot & \cdot & \cdot & \cdot & \cdot & \cdot & \cdot & \cdot & \cdot \\ \hline
-7 & \cdot & \cdot & \cdot & \cdot & \cdot & \cdot & 0 & \cdot & \cdot & \cdot & \cdot & \cdot & \cdot & \cdot & \cdot & \cdot & \cdot & \cdot \\ \hline
-6 & \cdot & \cdot & \cdot & \cdot & \cdot & 0 & \cdot & \cdot & \cdot & \cdot & \cdot & \cdot & \cdot & \cdot & \cdot & \cdot & \cdot & \cdot \\ \hline
-5  & \cdot & \cdot & \cdot & \cdot  & 0& \cdot & \cdot & \cdot & \cdot & \cdot & \cdot & \cdot & \cdot & \cdot & \cdot & \cdot & \cdot & \cdot \\ \hline
-4 & \cdot & \cdot & \cdot & 0 & \cdot & \cdot & \cdot & \cdot & \cdot & \cdot & \cdot & \cdot & \cdot & \cdot & \cdot & \cdot & \cdot & \cdot \\ \hline
-3  & \cdot & \cdot & 0 & \cdot & \cdot & \cdot & \cdot & \cdot & \cdot & \cdot & \cdot & \cdot & \cdot & \cdot & \cdot & \cdot & \cdot & \cdot \\ \hline
-2 & \cdot & 0& \cdot & \cdot & \cdot & \cdot & \cdot & \cdot & \cdot & \cdot & \cdot & \cdot & \cdot & \cdot & \cdot & \cdot & \cdot & \cdot \\ \hline
-1 & 0 & \cdot & \cdot & \cdot & \cdot & \cdot & \cdot & \cdot & \cdot & \cdot & \cdot & \cdot & \cdot & \cdot & \cdot & \cdot & \cdot & \cdot \\ \hline
\end{array}$}
\end{table}
Even more, the array $A$ is $\mathcal{D}_A$-Heffter over any partial loop (not necessarily associative) in $\mathcal{L}([9])$ whose set of entries contains all the highlighted ones. Concerning the largest possible weight of such a partial loop, note that a possible completion of the previous partial group is the following non-associative loop.

\begin{table}[H]
\resizebox{\textwidth}{!}{
$\begin{array}{|c||c|c|c|c|c|c|c|c|c|c|c|c|c|c|c|c|c|c|c|}\hline
0 & 1 & 2 & 3 & 4 & 5 & 6 & 7 & 8 & 9 & -9 & -8 & -7 & -6 & -5 & -4 & -3 & -2 & -1 \\ \hline \hline
1 & -1 & \cellcolor{red!100}{\color{white} -3} &\cellcolor{red!100}{\color{white} -2} & \cellcolor{orange!100}{\color{white}-7} &\cellcolor{black!100}{\color{white}  -9} & \cellcolor{blue!100}{\color{white} -8} & \cellcolor{orange!100}{\color{white}-4} & \cellcolor{blue!100}{\color{white} -6} & \cellcolor{black!100}{\color{white} -5} & 5 & 6 & 4 & 8 & 9 & 7 & 2 & 3 & 0 \\ \hline
2 & \cellcolor{red!100}{\color{white} -3} & -2 & \cellcolor{red!100}{\color{white} -1} & \cellcolor{blue!50}{\color{white} -9} & \cellcolor{orange!50}{\color{white}-8} & \cellcolor{black!50}{\color{white} -7} & \cellcolor{black!50}{\color{white} -6} & \cellcolor{orange!50}{\color{white}-5} & \cellcolor{blue!50}{\color{white} -4} & 4 & 5 & 6 & 7 & 8 & 9 & 1 & 0 & 3 \\ \hline
3 & \cellcolor{red!100}{\color{white} -2} & \cellcolor{red!100}{\color{white} -1} & -3 & \cellcolor{black!25}{\color{white} -8} &  \cellcolor{blue!25}{\color{white} -7} & \cellcolor{orange!25}{\color{white}-9} & \cellcolor{blue!25}{\color{white} -5 }& \cellcolor{black!25}{\color{white}-4} & \cellcolor{orange!25}{\color{white}-6} & 6 & 4 & 5 & 9 & 7 & 8 & 0 & 1 & 2 \\ \hline
4 & \cellcolor{orange!100}{\color{white}-7} & \cellcolor{blue!50}{\color{white} -9} & \cellcolor{black!25}{\color{white}-8} & -4 & \cellcolor{red!50}{\color{white} -6} & \cellcolor{red!50}{\color{white} -5} & \cellcolor{orange!100}{\color{white}-1} & \cellcolor{black!25}{\color{white}-3} & \cellcolor{blue!50}{\color{white} -2} & 2 & 3 & 1 & 5 & 6 & 0 & 8 & 9 & 7 \\ \hline
5 & \cellcolor{black!100}{\color{white} -9} & \cellcolor{orange!50}{\color{white}-8} &  \cellcolor{blue!25}{\color{white} -7} & \cellcolor{red!50}{\color{white} -6} & -5 & \cellcolor{red!50}{\color{white} -4} &  \cellcolor{blue!25}{\color{white} -3} & \cellcolor{orange!50}{\color{white}-2} & \cellcolor{black!100}{\color{white} -1} & 1 & 2 & 3 & 4 & 0 & 6 & 7 & 8 & 9 \\ \hline
6 & \cellcolor{blue!100}{\color{white} -8} & \cellcolor{black!50}{\color{white} -7} & \cellcolor{orange!25}{\color{white}-9} & \cellcolor{red!50}{\color{white} -5} & \cellcolor{red!50}{\color{white} -4} & -6 &\cellcolor{black!50}{\color{white}  -2} & \cellcolor{blue!100}{\color{white} -1} & \cellcolor{orange!25}{\color{white}-3} & 3 & 1 & 2 & 0 & 4 & 5 & 9 & 7 & 8 \\ \hline
7 & \cellcolor{orange!100}{\color{white}-4} & \cellcolor{black!50}{\color{white} -6} &  \cellcolor{blue!25}{\color{white} -5} & \cellcolor{orange!100}{\color{white}-1} & \cellcolor{blue!25}{\color{white} -3} & \cellcolor{black!50}{\color{white} -2} & -7 & \cellcolor{red!25}{\color{white} -9} & \cellcolor{red!25}{\color{white} -8} & 8 & 9 & 0 & 2 & 3 & 1 & 5 & 6 & 4 \\ \hline
8 & \cellcolor{blue!100}{\color{white} -6} & \cellcolor{orange!50}{\color{white}-5} & \cellcolor{black!25}{\color{white}-4} & \cellcolor{black!25}{\color{white}-3} & \cellcolor{orange!50}{\color{white}-2} & \cellcolor{blue!100}{\color{white} -1} & \cellcolor{red!25}{\color{white}-9} & -8 & \cellcolor{red!25}{\color{white}-7} & 7 & 0 & 9 & 1 & 2 & 3 & 4 & 5 & 6 \\ \hline
9 & \cellcolor{black!100}{\color{white} -5} & \cellcolor{blue!50}{\color{white} -4} & \cellcolor{orange!25}{\color{white}-6} & \cellcolor{blue!50}{\color{white} -2} &\cellcolor{black!100}{\color{white}  -1} & \cellcolor{orange!25}{\color{white}-3} & \cellcolor{red!25}{\color{white}-8} & \cellcolor{red!25}{\color{white}-7} & -9 & 0 & 7 & 8 & 3 & 1 & 2 & 6 & 4 & 5 \\ \hline
-9 & 5 & 4 & 6 & 2 & 1 & 3 & 8 & 7 & 0 & 9 & -7 & -8 & -3 & -1 & -2 & -6 & -4 & -5 \\ \hline
-8 & 6 & 5 & 4 & 3 & 2 & 1 & 9 & 0 & 7 & -7 & 8 & -9 & -1 & -2 & -3 & -4 & -5 & -6 \\ \hline
-7 & 4 & 6 & 5 & 1 & 3 & 2 & 0 & 9 & 8 & -8 & -9 & 7 & -2 & -3 & -1 & -5 & -6 & -4 \\ \hline
-6 & 8 & 7 & 9 & 5 & 4 & 0 & 2 & 1 & 3 & -3 & -1 & -2 & 6 & -4 & -5 & -9 & -7 & -8 \\ \hline
-5 & 9 & 8 & 7 & 6 & 0 & 4 & 3 & 2 & 1 & -1 & -2 & -3 & -4 & 5 & -6 & -7 & -8 & -9 \\ \hline
-4 & 7 & 9 & 8 & 0 & 6 & 5 & 1 & 3 & 2 & -2 & -3 & -1 & -5 & -6 & 4 & -8 & -9 & -7 \\ \hline
-3 & 2 & 1 & 0 & 8 & 7 & 9 & 5 & 4 & 6 & -6 & -4 & -5 & -9 & -7 & -8 & 3 & -1 & -2 \\ \hline
-2 & 3 & 0 & 1 & 9 & 8 & 7 & 6 & 5 & 4 & -4 & -5 & -6 & -7 & -8 & -9 & -1 & 2 & -3 \\ \hline
-1 & 0 & 3 & 2 & 7 & 9 & 8 & 4 & 6 & 5 & -5 & -6 & -4 & -8 & -9 & -7 & -2 & -3 & 1 \\ \hline
\end{array}$}
\end{table}
To see that it is not associative, note, for example, that $(1+2)+4=-3+4=8\neq 5=1-9=1+(2+4)$. \hfill $\lhd$
\end{example}

\vspace{0.5cm}

If $|\mathrm{Symb}(B)|>3$, then the distribution of entries in $\mathrm{Ent}(L_B)$ is not uniquely determined. Thus, for instance, if $\mathrm{Symb}(B)=\left\{s_1,s_2,s_3,s_4\right\}$, then we have that $s_i+s_j+s_k=-s_l$ and $s_i+s_j=-(s_k+s_l)$ whenever $\{i,j,k,l\}=\{1,2,3,4\}$. It implies the existence of six symbols $\alpha,\beta,\gamma,\delta,\epsilon,\varphi\in \widetilde{S}$ such that the following partial loop is embedded into $L_B$.

\begin{equation}\label{eq:LB4}
\begin{array}{|c||c|c|c|c|c|c|c|c|}\hline
0 & s_1 & s_2 & s_3 & s_4 & -s_4 &  -s_3 & -s_2 & -s_1\\ \hline \hline
s_1 & \cdot & \alpha & \beta & \gamma & \cdot & \cdot &\cdot & 0 \\ \hline
s_2 & \alpha & \cdot & \delta & \epsilon & \cdot & \cdot &0 & \cdot \\ \hline
s_3 & \beta & \delta & \cdot & \varphi & \cdot & 0 &\cdot & \cdot \\ \hline
s_4 & \gamma & \epsilon & \varphi & \cdot & 0 & \cdot &\cdot & \cdot \\ \hline
-s_4 & \cdot & \cdot & \cdot & 0 & \cdot & \alpha &\beta & \delta \\ \hline
-s_3 & \cdot & \cdot & 0 & \cdot & \alpha & \cdot &\gamma & \epsilon \\ \hline
-s_2 & \cdot & 0 & \cdot & \cdot & \beta & \gamma &\cdot & \varphi \\ \hline
-s_1 & 0 & \cdot & \cdot & \cdot & \delta & \epsilon &\varphi & \cdot \\ \hline
\end{array}
\end{equation}

\vspace{0.2cm}

\begin{example}\label{example_PHeffter_B} The distribution of entries described in (\ref{eq:LB3}) and (\ref{eq:LB4}) enables us to ensure that, under the assumptions of Example \ref{example_Affine_B}, the array $B$ is $\mathcal{D}_B$-Heffter over every partial loop whose Cayley table contains the following partial Latin square. (We make use of the block coloring described in Example \ref{example_Affine_B} for a much better understanding.)

\begin{table}[H]
\resizebox{\textwidth}{!}{
$\begin{array}{|c||c|c|c|c|c|c|c|c|c|c|c|c|c|c|c|c|c|c|c|c|c|c|c|c|c|}\hline
0 & 1 & 2 & 3 & 4 & 5 & 6 & 7 & 8 & 9 & 10 & 11 & 12 & -12 & -11 & -10 & -9 & -8 & -7 & -6 & -5 & -4 & -3 & -2 & -1 \\ \hline \hline
1& \cdot&\cellcolor{red!100}{\color{white} -3}  &\cellcolor{red!100}{\color{white} -2}  &\cellcolor{orange!100}{\color{white} 7} &\cdot &\cdot &\cellcolor{orange!100}{\color{white} 10} &\cdot &\cellcolor{blue!100}{\color{white} -11} &\cellcolor{orange!100}{\color{white} 4} &\cellcolor{blue!100}{\color{white} -9} &\cdot &\cdot &\cdot &\cellcolor{orange!100}{\color{white} -7} &\cdot &\cdot &\cellcolor{orange!100}{\color{white} -4} &\cdot &\cdot &\cellcolor{orange!100}{\color{white} -10} &\cdot &\cdot &0 \\ \hline
2 & \cellcolor{red!100}{\color{white} -3} &\cdot &\cellcolor{red!100}{\color{white} -1}  & \cellcolor{blue!50}{\color{white} -12}&\cellcolor{orange!50}{\color{white} 8} &\cdot &\cdot &\cellcolor{orange!50}{\color{white} 11} &\cdot &\cdot &\cellcolor{orange!50}{\color{white} 5} &\cellcolor{blue!50}{\color{white} -4} &\cdot &\cellcolor{orange!50}{\color{white} -8} &\cdot &\cdot &\cellcolor{orange!50}{\color{white} -5} &\cdot &\cdot &\cellcolor{orange!50}{\color{white} -11} &\cdot &\cdot &0 &\cdot \\ \hline
3& \cellcolor{red!100}{\color{white} -2} &\cellcolor{red!100}{\color{white} -1}  &\cdot &\cdot &\cellcolor{blue!30}{\color{white} -7} &\cellcolor{orange!30}{\color{white} 9} &\cellcolor{blue!30}{\color{white} -5} &\cdot &\cellcolor{orange!30}{\color{white} 12}  &\cdot & \cdot &\cellcolor{orange!30}{\color{white} 6}  &\cellcolor{orange!30}{\color{white} -9}  &\cdot &\cdot &\cellcolor{orange!30}{\color{white} -6}  &\cdot &\cdot&\cellcolor{orange!30}{\color{white} -12}  &\cdot &\cdot &0 &\cdot &\cdot \\ \hline
4& \cellcolor{orange!100}{\color{white} 7}& \cellcolor{blue!50}{\color{white} -12}& \cdot & \cdot & \cellcolor{red!50}{\color{white} -6} & \cellcolor{red!50}{\color{white} -5}& \cellcolor{orange!100}{\color{white} -4}& \cdot & \cdot & \cellcolor{orange!100}{\color{white} -10}& \cdot & \cellcolor{blue!50}{\color{white} -2}& \cdot & \cdot & \cellcolor{orange!100}{\color{white} 1}& \cdot & \cdot & \cellcolor{orange!100}{\color{white} -1}& \cdot & \cdot & 0& \cdot & \cdot & \cellcolor{orange!100}{\color{white} 10}\\ \hline
5 & \cdot & \cellcolor{orange!50}{\color{white} 8}&\cellcolor{blue!30}{\color{white} -7} &\cellcolor{red!50}{\color{white} -6} &\cdot  &\cellcolor{red!50}{\color{white} -4} &\cellcolor{blue!30}{\color{white} -3}  &\cellcolor{orange!50}{\color{white} -5} &\cdot  &\cdot  &\cellcolor{orange!50}{\color{white} -11} &\cdot &\cdot &\cellcolor{orange!50}{\color{white} 2} &\cdot &\cdot &\cellcolor{orange!50}{\color{white} -2} &\cdot &\cdot &0 &\cdot &\cdot &\cellcolor{orange!50}{\color{white} 11} &\cdot \\ \hline
6&\cdot &\cdot &\cellcolor{orange!30}{\color{white} 9}  &\cellcolor{red!50}{\color{white} -5} &\cellcolor{red!50}{\color{white} -4} &\cdot &\cdot &\cellcolor{blue!15}{\color{white} -10}  &\cellcolor{orange!30}{\color{white} -6}  &\cellcolor{blue!15}{\color{white} -8}  &\cdot &\cellcolor{orange!30}{\color{white} -12}  &\cellcolor{orange!30}{\color{white} 3}  &\cdot &\cdot &\cellcolor{orange!30}{\color{white} -3}  &\cdot &\cdot &0 &\cdot & \cdot & \cellcolor{orange!30}{\color{white} 12} & \cdot & \cdot\\ \hline
7 & \cellcolor{orange!100}{\color{white} 10}& \cdot & \cellcolor{blue!30}{\color{white} -5} & \cellcolor{orange!100}{\color{white} -4}& \cellcolor{blue!30}{\color{white} -3} & \cdot & \cdot & \cellcolor{red!30}{\color{white} -9}& \cellcolor{red!30}{\color{white} -8}& \cellcolor{orange!100}{\color{white} -7}& \cdot & \cdot & \cdot & \cdot & \cellcolor{orange!100}{\color{white} -1}& \cdot & \cdot & 0& \cdot & \cdot & \cellcolor{orange!100}{\color{white} 1}& \cdot & \cdot & \cellcolor{orange!100}{\color{white} 4}\\ \hline
8 & \cdot & \cellcolor{orange!50}{\color{white} 11}& \cdot & \cdot & \cellcolor{orange!50}{\color{white} -5}& \cellcolor{blue!15}{\color{white} -10} & \cellcolor{red!30}{\color{white} -9}& \cdot & \cellcolor{red!30}{\color{white} -7}& \cellcolor{blue!15}{\color{white} -6}& \cellcolor{blue!15}\cellcolor{orange!50}{\color{white} -8} & \cdot & \cdot & \cellcolor{orange!50}{\color{white} -2}& \cdot & \cdot & 0& \cdot & \cdot & \cellcolor{orange!50}{\color{white} 2}& \cdot & \cdot & \cellcolor{orange!50}{\color{white} 5}& \cdot \\ \hline
9 & \cellcolor{blue!100}{\color{white} -11}& \cdot & \cellcolor{orange!30}{\color{white} 12} & \cdot & \cdot & \cellcolor{orange!30}{\color{white} -6} & \cellcolor{red!30}{\color{white} -8}& \cellcolor{red!30}{\color{white} -7}& \cdot & \cdot & \cellcolor{blue!100}{\color{white} -1}& \cellcolor{orange!30}{\color{white} -9} & \cellcolor{orange!30}{\color{white} -3} & \cdot & \cdot & 0& \cdot & \cdot & \cellcolor{orange!30}{\color{white} 3} & \cdot & \cdot & \cellcolor{orange!30}{\color{white} 6}  & \cdot & \cdot \\ \hline
10 & \cellcolor{orange!100}{\color{white} 4}& \cdot & \cdot & \cellcolor{orange!100}{\color{white} -10}& \cdot & \cellcolor{blue!15}{\color{white} -8} & \cellcolor{orange!100}{\color{white} -7}& \cellcolor{blue!15}{\color{white} -6} & \cdot & \cdot & \cellcolor{red!15}{\color{white} -12}& \cellcolor{red!15}{\color{white} -11}& \cdot & \cdot & 0& \cdot & \cdot & \cellcolor{orange!100}{\color{white} 1}& \cdot & \cdot & \cellcolor{orange!100}{\color{white} -1}& \cdot & \cdot & \cellcolor{orange!100}{\color{white} 7}\\ \hline
11&\cellcolor{blue!100}{\color{white} -9} &\cellcolor{orange!50}{\color{white} 5} &\cdot &\cdot &\cellcolor{orange!50}{\color{white} -11} &\cdot &\cdot &\cellcolor{orange!50}{\color{white} -8}  &\cellcolor{blue!100}{\color{white} -1} &\cellcolor{red!15}{\color{white} -12} &\cdot &\cellcolor{red!15}{\color{white} -10} &\cdot &0 &\cdot & \cdot&\cellcolor{orange!50}{\color{white} 2} &\cdot &\cdot &\cellcolor{orange!50}{\color{white} -2} &\cdot &\cdot &\cellcolor{orange!50}{\color{white} 8} &\cdot \\ \hline
12& \cdot& \cellcolor{blue!50}{\color{white} -4}& \cellcolor{orange!30}{\color{white} 6} & \cellcolor{blue!50}{\color{white} -2}& \cdot& \cellcolor{orange!30}{\color{white} -12} & \cdot& \cdot& \cellcolor{orange!30}{\color{white} -9} & \cellcolor{red!15}{\color{white} -11}& \cellcolor{red!15}{\color{white} -10}& \cdot & 0& \cdot & \cdot & \cellcolor{orange!30}{\color{white} 3} & \cdot & \cdot & \cellcolor{orange!30}{\color{white} -3} & \cdot & \cdot & \cellcolor{orange!30}{\color{white} 9} & \cdot & \cdot \\ \hline
-12 & \cdot & \cdot & \cellcolor{orange!30}{\color{white} -9} & \cdot & \cdot & \cellcolor{orange!30}{\color{white} 3} & \cdot & \cdot & \cellcolor{orange!30}{\color{white} -3} & \cdot & \cdot & 0& \cdot & \cdot & \cdot & \cellcolor{orange!30}{\color{white} 9} & \cdot & \cdot & \cellcolor{orange!30}{\color{white} 12} & \cdot & \cdot & -\cellcolor{orange!30}{\color{white} -6} & \cdot & \cdot \\ \hline
-11 & \cdot & \cellcolor{orange!50}{\color{white} -8}& \cdot & \cdot & \cellcolor{orange!50}{\color{white} 2}& \cdot & \cdot & \cellcolor{orange!50}{\color{white} -2}& \cdot & \cdot & 0& \cdot & \cdot & \cdot & \cdot & \cdot & \cellcolor{orange!50}{\color{white} 8}& \cdot & \cdot & \cellcolor{orange!50}{\color{white} 11}& \cdot & \cdot & \cellcolor{orange!50}{\color{white} -5}& \cdot \\ \hline
-10 & \cellcolor{orange!100}{\color{white} -7}& \cdot & \cdot & \cellcolor{orange!100}{\color{white} 1}& \cdot & \cdot & \cellcolor{orange!100}{\color{white} -1}& \cdot & \cdot & 0& \cdot & \cdot & \cdot & \cdot & \cdot & \cdot & \cdot & \cellcolor{orange!100}{\color{white} 7}& \cdot & \cdot & \cellcolor{orange!100}{\color{white} 10}& \cdot & \cdot & \cellcolor{orange!100}{\color{white} -4}\\ \hline
-9 & \cdot & \cdot & \cellcolor{orange!30}{\color{white} -6} & \cdot & \cdot & \cellcolor{orange!30}{\color{white} -3} & \cdot & \cdot & 0& \cdot & \cdot & \cellcolor{orange!30}{\color{white} 3} & \cellcolor{orange!30}{\color{white} 9} & \cdot & \cdot &\cdot & \cdot & \cdot & \cellcolor{orange!30}{\color{white} 6} & \cdot & \cdot & \cellcolor{orange!30}{\color{white} -12} & \cdot & \cdot \\ \hline
-8 & \cdot & \cellcolor{orange!50}{\color{white} -5}& \cdot & \cdot & \cellcolor{orange!50}{\color{white} -2}& \cdot & \cdot & 0& \cdot & \cdot & \cellcolor{orange!50}{\color{white} 2}& \cdot & \cdot & \cellcolor{orange!50}{\color{white} 8}& \cdot & \cdot & \cdot & \cdot & \cdot & \cellcolor{orange!50}{\color{white} 5}& \cdot & \cdot & \cellcolor{orange!50}{\color{white} -11}& \cdot\\ \hline
-7& \cellcolor{orange!100}{\color{white} -4} & \cdot & \cdot & \cellcolor{orange!100}{\color{white} -1}& \cdot & \cdot & 0& \cdot & \cdot & \cellcolor{orange!100}{\color{white} 1}& \cdot & \cdot & \cdot & \cdot & \cellcolor{orange!100}{\color{white} 7}& \cdot & \cdot & \cdot & \cdot & \cdot & \cellcolor{orange!100}{\color{white} 4}& \cdot & \cdot & \cellcolor{orange!100}{\color{white} -10}\\ \hline
-6& \cdot & \cdot & \cellcolor{orange!30}{\color{white} -12} & \cdot & \cdot & 0& \cdot & \cdot & \cellcolor{orange!30}{\color{white} 3} & \cdot & \cdot & \cellcolor{orange!30}{\color{white} -3} & \cellcolor{orange!30}{\color{white} 12} & \cdot & \cdot & \cellcolor{orange!30}{\color{white} 6} & \cdot & \cdot & \cdot & \cdot& \cdot & \cellcolor{orange!30}{\color{white} -9} & \cdot & \cdot\\ \hline
-5& \cdot & \cellcolor{orange!50}{\color{white} -11}& \cdot & \cdot  0& \cdot & \cdot & \cellcolor{orange!50}{\color{white} 2}& \cdot & \cdot & \cellcolor{orange!50}{\color{white} -2}& \cdot & \cdot & \cellcolor{orange!50}{\color{white} 11}& \cdot &\cdot  & \cellcolor{orange!50}{\color{white} 5}& \cdot & \cdot & \cdot & \cdot & \cdot & \cellcolor{orange!50}{\color{white} -8} & \cdot\\ \hline
-4& \cellcolor{orange!100}{\color{white} -10}& \cdot & \cdot & 0& \cdot & \cdot & \cellcolor{orange!100}{\color{white} 1}& \cdot & \cdot & \cellcolor{orange!100}{\color{white} -1}& \cdot & \cdot & \cdot & \cdot & \cellcolor{orange!100}{\color{white} 10}& \cdot & \cdot & \cellcolor{orange!100}{\color{white} 4}& \cdot & \cdot & \cdot & \cdot & \cdot & \cellcolor{orange!100}{\color{white} -7}\\ \hline
-3& \cdot & \cdot& 0& \cdot & \cdot & \cellcolor{orange!30}{\color{white} 12} & \cdot & \cdot & \cellcolor{orange!30}{\color{white} 6} & \cdot & \cdot & \cellcolor{orange!30}{\color{white} 9} & \cellcolor{orange!30}{\color{white} -6} & \cdot & \cdot & \cellcolor{orange!30}{\color{white} -12} & \cdot & \cdot & \cellcolor{orange!30}{\color{white} -9} & \cdot & \cdot & \cdot & \cdot & \cdot \\ \hline
-2 & \cdot & 0& \cdot & \cdot & \cellcolor{orange!50}{\color{white} 11}& \cdot & \cdot & \cellcolor{orange!50}{\color{white} 5}& \cdot & \cdot & \cellcolor{orange!50}{\color{white} 8}& \cdot & \cdot & \cellcolor{orange!50}{\color{white} -5}& \cdot & \cdot & \cellcolor{orange!50}{\color{white} -11}& \cdot & \cdot & \cellcolor{orange!50}{\color{white} -8}& \cdot & \cdot & \cdot & \cdot \\ \hline
-1& 0& \cdot & \cdot & \cellcolor{orange!100}{\color{white} 10}& \cdot & \cdot & \cellcolor{orange!100}{\color{white} 4}& \cdot & \cdot & \cellcolor{orange!100}{\color{white} 7}& \cdot & \cdot & \cdot & \cdot & \cellcolor{orange!100}{\color{white} -4}& \cdot & \cdot & \cellcolor{orange!100}{\color{white} -10}& \cdot & \cdot & \cellcolor{orange!100}{\color{white} -7}& \cdot & \cdot & \cdot\\ \hline
\end{array}$}
\end{table}

A possible completion of this partial Latin square is the following loop.

\begin{table}[H]
\resizebox{\textwidth}{!}{
$\begin{array}{|c||c|c|c|c|c|c|c|c|c|c|c|c|c|c|c|c|c|c|c|c|c|c|c|c|c|}\hline
0 & 1 & 2 & 3 & 4 & 5 & 6 & 7 & 8 & 9 & 10 & 11 & 12 & -12 & -11 & -10 & -9 & -8 & -7 & -6 & -5 & -4 & -3 & -2 & -1 \\ \hline \hline
1& -1&\cellcolor{red!100}{\color{white} -3}  &\cellcolor{red!100}{\color{white} -2}  &\cellcolor{orange!100}{\color{white} 7} &6 &2 &\cellcolor{orange!100}{\color{white} 10} &3 &\cellcolor{blue!100}{\color{white} -11} &\cellcolor{orange!100}{\color{white} 4} &\cellcolor{blue!100}{\color{white} -9} &5 &8 &-12 &\cellcolor{orange!100}{\color{white} -7} &-8 &12 &\cellcolor{orange!100}{\color{white} -4} &11 &9 &\cellcolor{orange!100}{\color{white} -10} &-5 &-6 &0 \\ \hline
2& \cellcolor{red!100}{\color{white} -3} &-2 &\cellcolor{red!100}{\color{white} -1}  & \cellcolor{blue!50}{\color{white} -12}&\cellcolor{orange!50}{\color{white} 8} &1 &3 &\cellcolor{orange!50}{\color{white} 11} &4 &9 &\cellcolor{orange!50}{\color{white} 5} &\cellcolor{blue!50}{\color{white} -4} &7 &\cellcolor{orange!50}{\color{white} -8} &-6 &10 &\cellcolor{orange!50}{\color{white} -5} &12 &-10 &\cellcolor{orange!50}{\color{white} -11} &-9 &-7 &0 &6 \\ \hline
3& \cellcolor{red!100}{\color{white} -2} &\cellcolor{red!100}{\color{white} -1}  &-3 &2 &\cellcolor{blue!30}{\color{white} -7} &\cellcolor{orange!30}{\color{white} 9} &\cellcolor{blue!30}{\color{white} -5} &1 &\cellcolor{orange!30}{\color{white} 12}  &8 & 4 &\cellcolor{orange!30}{\color{white} 6}  &\cellcolor{orange!30}{\color{white} -9}  &-10 &-11 &\cellcolor{orange!30}{\color{white} -6}  &-4 &11 &\cellcolor{orange!30}{\color{white} -12}  &10 &-8 &0 &7 &5 \\ \hline
4& \cellcolor{orange!100}{\color{white} 7}& \cellcolor{blue!50}{\color{white} -12}& 2& -7& \cellcolor{red!50}{\color{white} -6} & \cellcolor{red!50}{\color{white} -5}& \cellcolor{orange!100}{\color{white} -4}& 12& 11& \cellcolor{orange!100}{\color{white} -10}& 6& \cellcolor{blue!50}{\color{white} -2}& -11& 3& \cellcolor{orange!100}{\color{white} 1}& 5& -9& \cellcolor{orange!100}{\color{white} -1}& -8& -3& 0& 8& 9& \cellcolor{orange!100}{\color{white} 10}\\ \hline
5& 6& \cellcolor{orange!50}{\color{white} 8}&\cellcolor{blue!30}{\color{white} -7} &\cellcolor{red!50}{\color{white} -6} &-8 &\cellcolor{red!50}{\color{white} -4} &\cellcolor{blue!30}{\color{white} -3}  &\cellcolor{orange!50}{\color{white} -5} &10 &12 &\cellcolor{orange!50}{\color{white} -11} &7 &-1 &\cellcolor{orange!50}{\color{white} 2} &9 &4 &\cellcolor{orange!50}{\color{white} -2} &-12 &1 &0 &3 &-10 &\cellcolor{orange!50}{\color{white} 11} &-9 \\ \hline
6&2 &1 &\cellcolor{orange!30}{\color{white} 9}  &\cellcolor{red!50}{\color{white} -5} &\cellcolor{red!50}{\color{white} -4} &-9 &11 &\cellcolor{blue!15}{\color{white} -10}  &\cellcolor{orange!30}{\color{white} -6}  &\cellcolor{blue!15}{\color{white} -8}  &7 &\cellcolor{orange!30}{\color{white} -12}  &\cellcolor{orange!30}{\color{white} 3}  &4 &5 &\cellcolor{orange!30}{\color{white} -3}  &-7 &-2 &0 &-1 & 8& \cellcolor{orange!30}{\color{white} 12} & 10& -11\\ \hline
7& \cellcolor{orange!100}{\color{white} 10}& 3& \cellcolor{blue!30}{\color{white} -5} & \cellcolor{orange!100}{\color{white} -4}& \cellcolor{blue!30}{\color{white} -3} & 11& -10& \cellcolor{red!30}{\color{white} -9}& \cellcolor{red!30}{\color{white} -8}& \cellcolor{orange!100}{\color{white} -7}& 9& 8& 5& 6& \cellcolor{orange!100}{\color{white} -1}& -2& -6& 0& 2& 12& \cellcolor{orange!100}{\color{white} 1}& -11& -12& \cellcolor{orange!100}{\color{white} 4}\\ \hline
8 & 3& \cellcolor{orange!50}{\color{white} 11}& 1& 12& \cellcolor{orange!50}{\color{white} -5}& \cellcolor{blue!15}{\color{white} -10} & \cellcolor{red!30}{\color{white} -9}& -11& \cellcolor{red!30}{\color{white} -7}& \cellcolor{blue!15}{\color{white} -6}& \cellcolor{blue!15}\cellcolor{orange!50}{\color{white} -8} & 10& -4& \cellcolor{orange!50}{\color{white} -2}& -3& -1& 0& 6& 7& \cellcolor{orange!50}{\color{white} 2}& 9& 4& \cellcolor{orange!50}{\color{white} 5}& -12\\ \hline
9 & \cellcolor{blue!100}{\color{white} -11}& 4& \cellcolor{orange!30}{\color{white} 12} & 11& 10& \cellcolor{orange!30}{\color{white} -6} & \cellcolor{red!30}{\color{white} -8}& \cellcolor{red!30}{\color{white} -7}& -12& 5& \cellcolor{blue!100}{\color{white} -1}& \cellcolor{orange!30}{\color{white} -9} & \cellcolor{orange!30}{\color{white} -3} & 7& -2& 0& 1& 2& \cellcolor{orange!30}{\color{white} 3} & -4& -5& \cellcolor{orange!30}{\color{white} 6}  & -10& 8\\ \hline
10 & \cellcolor{orange!100}{\color{white} 4}& 9& 8& \cellcolor{orange!100}{\color{white} -10}& 12& \cellcolor{blue!15}{\color{white} -8} & \cellcolor{orange!100}{\color{white} -7}& \cellcolor{blue!15}{\color{white} -6} & 5& -4& \cellcolor{red!15}{\color{white} -12}& \cellcolor{red!15}{\color{white} -11}& -2& -3& 0& 2& 3& \cellcolor{orange!100}{\color{white} 1}& -5& -9& \cellcolor{orange!100}{\color{white} -1}& 11& 6& \cellcolor{orange!100}{\color{white} 7}\\ \hline
11&\cellcolor{blue!100}{\color{white} -9} &\cellcolor{orange!50}{\color{white} 5} &4 &6 &\cellcolor{orange!50}{\color{white} -11} &7 &9 &\cellcolor{orange!50}{\color{white} -8}  &\cellcolor{blue!100}{\color{white} -1} &\cellcolor{red!15}{\color{white} -12} &-5 &\cellcolor{red!15}{\color{white} -10} &1 &0 &3 & -7&\cellcolor{orange!50}{\color{white} 2} &-6 &-4 &\cellcolor{orange!50}{\color{white} -2} &-3 &10 &\cellcolor{orange!50}{\color{white} 8} &12 \\ \hline
12& 5& \cellcolor{blue!50}{\color{white} -4}& \cellcolor{orange!30}{\color{white} 6} & \cellcolor{blue!50}{\color{white} -2}& 7& \cellcolor{orange!30}{\color{white} -12} & 8& 10& \cellcolor{orange!30}{\color{white} -9} & \cellcolor{red!15}{\color{white} -11}& \cellcolor{red!15}{\color{white} -10}& -6& 0& -1& 2& \cellcolor{orange!30}{\color{white} 3} & 4& -5& \cellcolor{orange!30}{\color{white} -3} & 1& 11& \cellcolor{orange!30}{\color{white} 9} & -7& -8\\ \hline
-12 & 8& 7& \cellcolor{orange!30}{\color{white} -9} & -11& -1& \cellcolor{orange!30}{\color{white} 3} & 5& -4& \cellcolor{orange!30}{\color{white} -3} & -2& 1 & 0& 6& 10& 11& \cellcolor{orange!30}{\color{white} 9} & -10& -8& \cellcolor{orange!30}{\color{white} 12} & -7& 2& -\cellcolor{orange!30}{\color{white} -6} & 4& -5\\ \hline
-11 & -12& \cellcolor{orange!50}{\color{white} -8}& -10& 3& \cellcolor{orange!50}{\color{white} 2}& 4& 6& \cellcolor{orange!50}{\color{white} -2}& 7& -3& 0& -1& 10& 5& 12& 1& \cellcolor{orange!50}{\color{white} 8}& -9& -7& \cellcolor{orange!50}{\color{white} 11}& -6& -4& \cellcolor{orange!50}{\color{white} -5}& 9\\ \hline
-10 & \cellcolor{orange!100}{\color{white} -7}& -6& -11& \cellcolor{orange!100}{\color{white} 1}& 9& 5& \cellcolor{orange!100}{\color{white} -1}& -3& -2& 0& 3& 2& 11& 12& 4& -5& 6& \cellcolor{orange!100}{\color{white} 7}& 8& -12& \cellcolor{orange!100}{\color{white} 10}& -8& -9& \cellcolor{orange!100}{\color{white} -4}\\ \hline
-9& -8& 10& \cellcolor{orange!30}{\color{white} -6} & 5& 4& \cellcolor{orange!30}{\color{white} -3} & -2& -1& 0& 2& -7& \cellcolor{orange!30}{\color{white} 3} & \cellcolor{orange!30}{\color{white} 9} & 1& -5& 12& 7& 8& \cellcolor{orange!30}{\color{white} 6} & -10& -11& \cellcolor{orange!30}{\color{white} -12} & -4& 11\\ \hline
-8 & 12& \cellcolor{orange!50}{\color{white} -5}& -4& -9& \cellcolor{orange!50}{\color{white} -2}& -7& -6& 0& 1& 3& \cellcolor{orange!50}{\color{white} 2}& 4& -10& \cellcolor{orange!50}{\color{white} 8}& 6& 7& 11& 9& 10& \cellcolor{orange!50}{\color{white} 5}& -12& -1& \cellcolor{orange!50}{\color{white} -11}& -3\\ \hline
-7& \cellcolor{orange!100}{\color{white} -4} & 12& 11& \cellcolor{orange!100}{\color{white} -1}& -12& -2& 0& 6& 2& \cellcolor{orange!100}{\color{white} 1}& -6& -5& -8& -9& \cellcolor{orange!100}{\color{white} 7}& 8& 9& 10& -11& 3& \cellcolor{orange!100}{\color{white} 4}& 5& -3& \cellcolor{orange!100}{\color{white} -10}\\ \hline
-6& 11& -10& \cellcolor{orange!30}{\color{white} -12} & -8& 1& 0& 2& 7& \cellcolor{orange!30}{\color{white} 3} & -5& -4& \cellcolor{orange!30}{\color{white} -3} & \cellcolor{orange!30}{\color{white} 12} & -7& 8& \cellcolor{orange!30}{\color{white} 6} & 10& -11& 9 & 4& 5& \cellcolor{orange!30}{\color{white} -9} & -1& -2\\ \hline
-5& 9& \cellcolor{orange!50}{\color{white} -11}& 10& -3& 0& -1& 12& \cellcolor{orange!50}{\color{white} 2}& -4& -9& \cellcolor{orange!50}{\color{white} -2}& 1& -7& \cellcolor{orange!50}{\color{white} 11}& -12&-10 & \cellcolor{orange!50}{\color{white} 5}& 3& 4& 8& 6& 7& \cellcolor{orange!50}{\color{white} -8} & -6\\ \hline
-4& \cellcolor{orange!100}{\color{white} -10}& -9& -8& 0& 3& 8& \cellcolor{orange!100}{\color{white} 1}& 9& -5& \cellcolor{orange!100}{\color{white} -1}& -3& 11& 2& -6& \cellcolor{orange!100}{\color{white} 10}& -11& -12& \cellcolor{orange!100}{\color{white} 4}& 5& 6& 7& -2& 12& \cellcolor{orange!100}{\color{white} -7}\\ \hline
-3& -5& -7& 0& 8& -10& \cellcolor{orange!30}{\color{white} 12} & -11& 4& \cellcolor{orange!30}{\color{white} 6} & 11& 10& \cellcolor{orange!30}{\color{white} 9} & \cellcolor{orange!30}{\color{white} -6} & -4& -8& \cellcolor{orange!30}{\color{white} -12} & -1& 5& \cellcolor{orange!30}{\color{white} -9} & 7& -2& 3& 1& 2\\ \hline
-2& -6& 0& 7& 9& \cellcolor{orange!50}{\color{white} 11}& 10& -12& \cellcolor{orange!50}{\color{white} 5}& -10& 6& \cellcolor{orange!50}{\color{white} 8}& -7& 4& \cellcolor{orange!50}{\color{white} -5}& -9& -4& \cellcolor{orange!50}{\color{white} -11}& -3& -1& \cellcolor{orange!50}{\color{white} -8}& 12& 1& 2& 3\\ \hline
-1& 0& 6& 5& \cellcolor{orange!100}{\color{white} 10}& -9& -11& \cellcolor{orange!100}{\color{white} 4}& -12& 8& \cellcolor{orange!100}{\color{white} 7}& 12& -8& -5& 9& \cellcolor{orange!100}{\color{white} -4}& 11& -3& \cellcolor{orange!100}{\color{white} -10}& -2& -6& \cellcolor{orange!100}{\color{white} -7}& 2& 3& 1\\ \hline
\end{array}$}
\end{table}

This loop is not associative. Thus, for instance, $(2+3)+4=-1+4=10\neq -2=2+2=2+(3+4)$. \hfill $\lhd$
\end{example}

\section{Conclusion and further works}

This paper has dealt with the existence of Heffter arrays in which the summation of symbols is not computed via an abelian group, but a partial loop. In this regard, we have extended the original notion of Heffter array, not only to the non-commutative case (as it has recently been suggested in \cite{Buratti2023}, and dealt with in \cite{MT}), but also to the non-associative case. Even more, we have delved into the possible use of partial binary operations. To this end, we have introduced the concepts of $\mathcal{P}$- and $\mathcal{D}$-Heffter arrays over an additive partial loop, where $\mathcal{P}$ is a set of all-one polynomials without constant terms, whose variables are related to the blocks of an affine $1$-design $\mathcal{D}$ on the set of entries in the Heffter array.

We have proved the existence of these new objects beyond the case of dealing with abelian groups. This has been illustrated with a construction of $\mathcal{P}$- and $\mathcal{D}$-Heffter arrays over partial loops in which the affine design $\mathcal{D}$ contains many blocks, and every block is associated to a large number of sum polynomials in $\mathcal{P}$. By considering an affine design over the elements of a partially filled array, our construction deals with the existence problem of Heffter linear spaces from a different perspective.  Indeed, we show that it is possible to construct these structures over non-associative loops (which in some cases are also abelian), having many zero sum block polynomials. The construction of subsquares relative to the symbol set of each block shows some necessary conditions that have to be satisfied by a partial loop in order to build a loop that is $\mathcal{D}$-Heffter. Can these conditions show a possible completion of a partial loop to a group, or disprove it? A solution to this problem could respectively lead to existence or non-existence results of Heffter linear spaces over groups.

\section*{Acknowledgements}

The first author is partially supported by both the Research Project FQM-016 {\em ``Codes, Design, Cryptography and Optimization''}, from Junta de Andaluc\'\i a, and the Research Project TED2021-130566B-100 from Ministry of Science and Innovation of the Government of Spain. The second author is  partially supported by INdAM - GNSAGA. The research was also supported by the Italian Ministry of University and Research through the project named “Fondi per attivit\`a a carattere internazionale” 2022 INTER DICATAM PASOTTI.

\end{document}